\newtheorem{theorem}{Theorem}[section]
\newtheorem{lemma}[theorem]{Lemma}
\newtheorem{prop}[theorem]{Proposition}
\newtheorem{corollary}[theorem]{Corollary}
\theoremstyle{definition}
\newtheorem{rem}[theorem]{Remark}
\newtheorem{rems}[theorem]{Remarks}
\newtheorem{Qu}[theorem]{Question}
\newcommand\pf{\begin{proof}}
\newcommand\epf{\end{proof}}
\newcommand\ab{\mathrm{ab}}
\newcommand\co{\mathrm{co}}
\newcommand\Forms{\mathrm{Forms}}
\newcommand\eps{\varepsilon}
\newcommand\Frac{\operatorname{Frac}}
\newcommand\Alg{\operatorname{Alg}}
\newcommand\Spec{\operatorname{Spec}}
\renewcommand\AA{\mathcal A}
\newcommand\BB{\mathcal B}
\newcommand\ZZ{\mathbb{Z}}
\newcommand\mm{\mathfrak{m}}
\DeclareMathOperator{\Ker}{Ker}
\DeclareMathOperator{\id}{id}
\numberwithin{equation}{section}
\title[Properties of the generic Hopf Galois extensions]
{Flatness and freeness properties of the generic Hopf Galois extensions}
\author{Christian Kassel}
\address{Christian Kassel: 
Institut de Recherche Math\'e\-ma\-tique Avanc\'ee,
CNRS \& Universit\'e de Strasbourg,
7 rue Ren\'{e} Descartes, 67084 Strasbourg, France}
\email{kassel@math.unistra.fr}
\urladdr{www-irma.u-strasbg.fr/\raise-2pt\hbox{\~{}}kassel/}
\author{Akira Masuoka}
\address{Akira Masuoka: 
Institute of Mathematics, 
University of Tsukuba, 
Ibaraki 305-8571, Japan}
\email{akira@math.tsukuba.ac.jp}
\dedicatory{Dedicado a Hans-J\"urgen Schneider con respeto y admiraci\'on}
\begin{document}

\begin{abstract}
In previous work, 
to each Hopf algebra~$H$ and each invertible two-cocycle~$\alpha$,
Eli Aljadeff and the first-named author attached a subalgebra~$\BB_H^{\alpha}$ 
of the free commutative Hopf algebra~$S(t_H)_{\Theta}$ 
generated by the coalgebra underlying~$H$;
the algebra~$\BB_H^{\alpha}$ is the subalgebra of coinvariants of a generic Hopf Galois extension. 
In this paper we give conditions under which $S(t_H)_{\Theta}$ is faithfully flat,
or even free, as a $\BB_H^{\alpha}$-module. 
We also show that $\BB_H^{\alpha}$ is generated as an algebra
by certain elements arising from the theory of polynomial identities for comodule algebras
developped jointly with Aljadeff.
\end{abstract}

\maketitle

\noindent
{\sc Key Words:}
Hopf algebra, Galois extension, twisted product, generic, cocycle, flat, free

\medskip
\noindent
{\sc Mathematics Subject Classification (2000):}
16T05, 
16S40, 
16D40,
13B05

\hspace{3cm}

\section*{Introduction}

The aim of the present article is to answer two questions left unanswered in~\cite{AK}
and in the follow-up paper~\cite{Ka1}.
Eli Aljadeff and the first-named author associated
to any Hopf algebra~$H$ and any invertible two-cocycle~$\alpha$
a subalgebra~$\BB_H^{\alpha}$ of the free commutative Hopf algebra~$S(t_H)_{\Theta}$ 
generated by the coalgebra underlying~$H$;
the commutative algebra~$\BB_H^{\alpha}$ is the subalgebra of coinvariants
of the \emph{generic Hopf Galois extension}~$\AA_H^{\alpha}$
introduced in~\cite{AK}.

In~\cite[Sect.~7]{AK} it was proved that if 
$S(t_H)_{\Theta}$ is integral over the subalgebra~$\BB_H^{\alpha}$,
then for any field extension~$K/k$, there is a surjective map
\begin{equation}\label{surj}
\Alg(\BB_H^{\alpha},K) \to \Forms_K({}^{\alpha} H) \, 
\end{equation}
from the set of algebra morphisms $\BB_H^{\alpha} \to K$ to the set of isomorphism classes
of $K$-forms of the twisted algebra~${}^{\alpha} H$
(for details, see \emph{loc.\ cit.} or Section~\ref{basic} below).

The first question raised in~\cite{Ka1} (Question~6.1) was: 
under which condition on the pair~$(H,\alpha)$ is
$S(t_H)_{\Theta}$ integral over~$\BB_H^{\alpha}$?
Now, the existence and the surjectivity of the map~\eqref{surj} 
hold under the less restrictive condition
that $\BB_H^{\alpha} \hookrightarrow S(t_H)_{\Theta}$ be \emph{faithfully flat}.
In the present paper we shall give sufficient conditions for the latter to hold.

After having defined in Section~\ref{basic} the objects of interest to us,
in particular the algebras $S(t_H)_{\Theta}$ and~$\BB_H^{\alpha}$, 
we reduce the question to the case where~$\alpha = \eps$ is the trivial two-cocycle
and to the corresponding subalgebra~$\BB_H = \BB_H^{\eps}$ (see Section~\ref{reduction}).

In Section~\ref{results} we show that the $\mathcal{B}_H$-module $S(t_H)_{\Theta}$ is
\begin{itemize}

\item[(i)] 
a \emph{finitely generated projective generator} 
if $H$ is finite-dimensional (see Theorem~\ref{thm-finite}), 
 
\item[(ii)] 
\emph{faithfully flat} if $H$ is cocommutative (see Theorem~\ref{thm-cocom}), and
 
\item[(iii)] 
\emph{free} if
\begin{itemize}
\item[(a)] $H$ is a pointed Hopf algebra whose grouplike elements 
satisfy a certain finiteness condition
(see Theorem~\ref{thm-pointed}), or
\item[(b)] $H$ is commutative or pointed cocommutative (see Theorem~\ref{thm-com}).
\end{itemize}
\end{itemize}
In all these cases, $\BB_H \hookrightarrow S(t_H)_{\Theta}$ is faithfully flat.
These results extend by far Theorem~6.2 of~\cite{Ka1}, which in particular implies 
that the above integrality condition is
satisfied by any finite-dimensional Hopf algebra generated by grouplike 
and skew-primitive elements.

The algebra~$\BB_H$ is generated by the values of the so-called generic cocycle and of its inverse
(see Section~\ref{base}). In~\cite{AK} a theory of polynomial identities
for $H$-comodule algebras was worked out. Such identities live in a tensor algebra~$T(X_H)$.
Certain coinvariant elements of~$T(X_H)$ give rise to natural elements 
$p_x$, $p'_x$, $q_{x,y}$, $q'_{x,y}$ of~$\BB_H$.
The second question we address here is the following: do these elements generate~$\BB_H$? 
In Section~\ref{gens} we give a positive answer for any Hopf algebra
(the answer had been known so far only for group algebras and the four-dimensional Sweedler algebra).

Throughout the paper we fix a field~$k$ over which all
our constructions are defined. 
In particular, all linear maps are supposed to be $k$-linear
and unadorned tensor products mean tensor products over~$k$.

For us an algebra is an associative unital $k$-algebra
and a coalgebra is a coassociative counital $k$-coalgebra. 
We denote the coproduct of a coalgebra by~$\Delta$ and its counit by~$\eps$.
We shall also make use of a Heyneman-Sweedler-type notation 
for the image 
$$\Delta(x) = x_1 \otimes x_2$$
of an element~$x$ of a coalgebra~$C$
under the coproduct, and we write
$$\Delta^{(2)}(x) = x_1 \otimes x_2 \otimes x_3$$
for the iterated coproduct 
$\Delta^{(2)} = (\Delta \otimes \id_C) \circ \Delta = (\id_C \otimes \Delta) \circ \Delta$,
and so~on. 

For any Hopf algebra~$H$ we denote its antipode by~$S$
and its group of grouplike elements by~$G(H)$.

\section{The basic objects}\label{basic}

We now define the basic objects investigated in this paper.

\subsection{The free commutative Hopf algebra generated by a coalgebra}\label{Takeuchi}

Let $C$ be a coalgebra. Pick another copy~$t_C$ of the underlying vector space of~$C$
and denote the identity map from $C$ to~$t_C$ by $x\mapsto t_x$ ($x\in C$).

Let $S(t_C)$ be the symmetric algebra over the vector space~$t_C$.
If $\{x_i\}_{i\in I}$ is a linear basis of~$C$, then $S(t_C)$ is
isomorphic to the polynomial algebra over the indeterminates $\{t_{x_i}\}_{i\in I}$.
The commutative algebra~$S(t_C)$ has a bialgebra structure with coproduct 
$\Delta : S(t_C) \to S(t_C) \otimes S(t_C)$ and counit $\eps : S(t_C) \to k$ induced
by the coproduct and the counit of~$C$, respectively, namely
\begin{equation}\label{comm-coalg}
\Delta(t_x) = t_{x_1} \otimes t_{x_2} \quad\text{and}\quad
\eps(t_x) = \eps(x)
\end{equation}
for all $x\in C$.
The bialgebra~$S(t_C)$ is the \emph{free commutative bialgebra} generated by~$C$:
this means that for any coalgebra morphism $f: C \to B$ from~$C$ to a commutative bialgebra~$B$,
there is a unique bialgebra morphism $\bar f : S(t_C) \to B$ such that
$\bar f(t_x) = f(x)$ for all $x\in C$.

By~\cite[Lemma~A.1]{AK}
there is a unique linear map $x \mapsto t^{-1}_x$
from~$C$ to the field of fractions~$\Frac S(t_C)$ of~$S(t_C)$
such that for all $x\in C$,
\begin{equation*}\label{tt}
t_{x_1} \, t^{-1}_{x_{2}} = t^{-1}_{x_{1}} \,  t_{x_{2}} = \eps(x) \, 1 \, .
\end{equation*}
As in~\cite[App.~B]{AK} we denote by~$S(t_C)_{\Theta}$ the subalgebra of~$\Frac S(t_C)$ generated 
by all elements~$t_x$ and~$t^{-1}_x$. 
The commutative algebra~$S(t_C)_{\Theta}$ has a Hopf algebra structure with coproduct 
$\Delta : S(t_C)_{\Theta} \to S(t_C)_{\Theta} \otimes S(t_C)_{\Theta}$, 
counit $\eps : S(t_C)_{\Theta} \to k$, and antipode $S : S(t_C)_{\Theta} \to S(t_C)_{\Theta}$ 
determined by~\eqref{comm-coalg}, by
\begin{equation}\label{comm-Hopf}
\Delta(t^{-1}_x) = t^{-1}_{x_2} \otimes t^{-1}_{x_1} \quad\text{and}\quad
\eps(t^{-1}_x) = \eps(x) \, ,
\end{equation}
and by
\begin{equation}\label{ant-comm-coalg}
S(t_x) = t^{-1}_x \quad\text{and}\quad 
S(t^{-1}_x) = t_x
\end{equation}
for all $x\in C$.
The Hopf algebra~$S(t_C)_{\Theta}$ is the \emph{free commutative Hopf algebra} generated by~$C$,
as constructed by Takeuchi in~\cite{Ta1}:
more precisely, for any coalgebra morphism $f: C \to H$ from~$C$ to a commutative Hopf algebra~$H$,
there is a unique Hopf algebra morphism $\tilde f : S(t_C)_{\Theta} \to H$ such that
$\tilde f(t_x) = f(x)$ and $\tilde f(t^{-1}_x) = S(f(x))$ for all $x\in C$.

As an example, consider the coalgebra~$C=kX$ freely spanned by a set~$X$ 
of grouplike elements.
The resulting commutative Hopf algebra $S(t_C)_{\Theta}$ is 
isomorphic to the group algebra~$k\ZZ^{(X)}$, 
where $\ZZ^{(X)}$ is the free abelian group with basis~$X$.
Note that $k\ZZ^{(X)}$ is isomorphic to the Laurent polynomial algebra
$k[\, t_x^{\pm 1} \, | \, x\in X\, ]$.

\subsection{Cocycles}\label{cocycle}

Let $H$ be a Hopf algebra.
Recall that a \emph{two-cocycle}~$\alpha$ on~$H$ is 
a bilinear form $\alpha : H \times H \to k$ such that
\begin{equation*}
\alpha(x_1,y_1)\, \alpha(x_2 y_2, z)
= \alpha(y_1, z_1)\, \alpha(x, y_2 z_2)
\end{equation*}
for all $x,y,z \in H$.
We assume that $\alpha$ is convolution-invertible and write~$\alpha^{-1}$ for its inverse.

We denote by~${}^{\alpha}H$ the \emph{twisted $H$-comodule algebra} defined as follows:
as a right $H$-comodule, ${}^{\alpha}H = H$; as an algebra it is equipped with the product
\begin{equation*}
(x, y) \mapsto \alpha(x_1, y_1) \, x_2 y_2 \, ,
\end{equation*}
where $x,y \in H$. The algebra~${}^{\alpha}H$ is a right $H$-comodule algebra
whose subalgebra of $H$-coinvariants consists of the multiple scalars of the unit:
\begin{equation*}
({}^{\alpha}H)^{\co-H} = k\, 1 \, .
\end{equation*}
 
Two-cocycles $\alpha, \beta : H \times H \to k$ are said
to be \emph{cohomologous} if 
there is a  convolution-invertible linear form $\lambda : H \to k$ 
with inverse~$\lambda^{-1}$ such that
\begin{equation}\label{cohomologous}
\beta(x,y) =  \lambda(x_1) \, \lambda(y_1) \, \alpha(x_2,y_2) \, \lambda^{-1}(x_3y_3)
\end{equation}
for all $x, y \in H$.
If $\alpha, \beta, \lambda$ are related as in the previous equation, 
then the map
\begin{equation*}
{}^{\alpha}H \longrightarrow {}^{\beta}H \, ; \; x \longmapsto \lambda^{-1}(x_1) \, x_2
\end{equation*}
is an isomorphism of $H$-comodule algebras.

\subsection{The generic cocycle and the generic base algebra}\label{base}

Now, to a pair $(H,\alpha)$ consisting of a Hopf algebra~$H$ and 
a convolution-invertible two-cocycle~$\alpha$,
we attach a bilinear map
$\sigma : H \times H \to S(t_H)_{\Theta}$
with values in the previously defined algebra~$S(t_H)_{\Theta}$.
The map~$\sigma$ is given for all $x,y \in H$ by
\begin{equation*}\label{sigma-def}
\sigma(x,y) = t_{x_1} \, t_{y_1} \, \alpha(x_2,y_2) \, t^{-1}_{x_3 y_3} \, .
\end{equation*} 
The map $\sigma$ is a two-cocycle of~$H$ with values in~$S(t_H)_{\Theta}$;
by construction, $\sigma$~is cohomologous to~$\alpha$ over~$S(t_H)_{\Theta}$.
We call $\sigma$ the \emph{generic cocycle attached to}~$\alpha$.

The cocycle $\alpha$ being invertible, so is~$\sigma$,
with inverse $\sigma^{-1}$ given for all $x,y \in H$ by
\begin{equation*}\label{sigma^{-1}-def}
\sigma^{-1}(x,y) = t_{x_1 y_1} \, 
\alpha^{-1}(x_2,y_2) \, t^{-1}_{x_3}  \, t^{-1}_{y_3}\, .
\end{equation*}

Following~\cite[Sect.~5]{AK} and~\cite[Sect.~3]{Ka1},
we define  the \emph{generic base algebra}
as the subalgebra~$\BB_H^{\alpha}$ of~$S(t_H)_{\Theta}$ 
generated by the values of the generic cocycle~$\sigma$ and 
of its inverse~$\sigma^{-1}$.

Since $\BB_H^{\alpha}$ is a subalgebra of~$S(t_H)_{\Theta}$, 
the latter is a module over the former. 
In Section~\ref{results} we shall discuss when such a module is free, projective, or flat.

\subsection{The generic Galois extension}\label{Galois}

Since the generic cocycle~$\sigma$ takes its values in~$\BB_H^{\alpha}$,
we can consider the twisted $H$-comodule algebra
\[
\AA_H^{\alpha} = \BB_H^{\alpha} \otimes {}^{\sigma} H\, ,
\] 
which is equal to~$\BB_H^{\alpha} \otimes H$  as a right $H$-comodule,
and whose product is given for all $b$, $c\in \BB_H^{\alpha}$ and $x$, $y \in H$ by
\begin{equation*}
(b\otimes x)  (c\otimes y)
= bc \, \sigma(x_1, y_1) \otimes x_2 y_2 \, .
\end{equation*}
The $H$-comodule algebra~$\AA_H^{\alpha}$ is a $H$-Galois extension
with~$\BB_H^{\alpha} = \BB_H^{\alpha} \otimes 1$ as subalgebra of coinvariants.
We call~$\AA_H^{\alpha}$ the \emph{generic Galois extension}
attached to the cocycle~$\alpha$.

By~\cite[Prop.~5.3]{AK}, there is an algebra morphism
$\chi_0 : \BB_H^{\alpha} \to k$ such that
\[
\chi_0 \bigl( \sigma(x,y) \bigr) = \alpha(x,y)
\quad\text{and}\quad
\chi_0 \bigl( \sigma^{-1}(x,y) \bigr) = \alpha^{-1}(x,y)
\]
for all $x,y \in H$. 
Consider the maximal ideal $\mm_0 = \Ker (\chi_0 : \BB_H^{\alpha} \to k)$ 
of~$\BB_H^{\alpha}$. 
According to~\cite[Prop.~6.2]{AK},
there is an isomorphism of $H$-comodule algebras
\begin{equation*}
\AA_H^{\alpha}/\mm_0 \, \AA_H^{\alpha} \cong {}^{\alpha} H \, .
\end{equation*}
Thus, ${}^{\alpha} H$ is a central specialization of~$\AA_H^{\alpha}$, 
and $\AA_H^{\alpha}$ is a flat deformation
of~${}^{\alpha} H$ over the commutative algebra~$\BB_H^{\alpha}$.

What about the other central specializations
of~$\AA_H^{\alpha}$, that is the quotients $\AA_H^{\alpha}/\mm \, \AA_H^{\alpha}$,
where $\mm$ is an arbitrary maximal ideal of~$\BB_H^{\alpha}$?
To answer this question, we need the following terminology.

Let $\beta : H \times H \to K$ be an invertible right two-cocycle
with values in a field~$K$ containing the ground field~$k$.
We say that the twisted comodule algebra~$K \otimes_k {}^{\beta} H$
is a \emph{$K$-form} of ${}^{\alpha} H$ if
there is a field~$L$ containing~$K$ and 
an $L$-linear isomorphism of comodule algebras
\[
L \otimes_K (K \otimes_k {}^{\beta} H) \cong L\otimes_k {}^{\alpha} H \, .
\]

By~\cite[Sect.~7]{AK},
for any $K$-form $K \otimes_k {}^{\beta} H$ of ${}^{\alpha} H$,
there exist an algebra morphism $\chi : \BB_H^{\alpha} \to K$ 
and a $K$-linear isomorphism of~$H$-comodule algebras
\[
K_{\chi} \otimes_{\BB_H^{\alpha}} \AA_H^{\alpha} \cong K \otimes_k {}^{\beta} H \, .
\]
Here $K_{\chi}$ stands for~$K$ equipped with the  
$\BB_H^{\alpha}$-module structure induced by
the algebra morphism $\chi : \BB_H^{\alpha} \to K$.
We have 
\[
K_{\chi} \otimes_{\BB_H^{\alpha}} \AA_H^{\alpha} 
\cong \AA_H^{\alpha}/\mm_{\chi} \, \AA_H^{\alpha} \, ,
\]
where $\mm_{\chi} = \Ker(\chi : \BB_H^{\alpha} \to K)$.

The converse is the following statement:
for any field~$K$ containing~$k$ and any algebra morphism
$\chi : \BB_H^{\alpha} \to K$, the $H$-comodule $K$-algebra
\[
K_{\chi} \otimes_{\BB_H^{\alpha}} \AA_H^{\alpha} 
= \AA_H^{\alpha}/ \mm_{\chi} \, \AA_H^{\alpha}
\]
is a $K$-form of~${}^{\alpha} H$.

Theorem~7.2 of~\cite{AK} asserts that this converse holds if 
the algebra $S(t_H)_{\Theta}$ is integral over the subalgebra~$\BB_H^{\alpha}$.
Using~\cite[Th.~3 in Sect.~(4.D)]{Mat} or~\cite[Th.~on p.~105]{Wa}, 
we see that the proof of~\cite[Th.~7.2]{AK} 
works in the more general case where $S(t_H)_{\Theta}$ 
is a \emph{faithfully flat} $\BB_H^{\alpha}$-module
(in particular, if it is a free module).

\section{Reduction to the trivial cocycle}\label{reduction}

The aim of this section is to reduce the previous situation to the case
of the trivial cocycle.

Let us start again with a Hopf algebra~$H$ and an invertible two-cocycle
$\alpha : H \times H \to k$.
Let $L = {}^{\alpha} H ^{\alpha^{-1}}$ be the cocycle deformation of~$H$ by~$\alpha$:
as a coalgebra, $L = H$, and the product~$*$ of~$L$ is given for $x,y \in H$ by
\begin{equation}\label{prodL}
x * y = \alpha(x_1,y_1) \, x_2 y_2 \, \alpha^{-1}(x_3,y_3) \, .
\end{equation}

Let $\eps : L \times L \to k$ be the \emph{trivial cocycle} on~$L$
given for all $x,y \in L$ by 
$$\eps(x,y) = \eps(x) \eps(y)\, .$$

On the level of generic base algebras we have the following result.

\begin{prop}\label{prop-reduction1}
We have $\BB_H^{\alpha} = \BB_L^{\eps}$ inside~$S(t_H)_{\Theta}$.
\end{prop}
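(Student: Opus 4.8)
The key observation is that the Hopf algebra $L = {}^{\alpha}H{}^{\alpha^{-1}}$ has the same underlying coalgebra as~$H$, so that $t_L = t_H$ and $S(t_L)_{\Theta} = S(t_H)_{\Theta}$ as Hopf algebras; in particular the elements $t_x$ and $t^{-1}_x$ are literally the same in both constructions. Thus it makes sense to compare the two subalgebras $\BB_H^{\alpha}$ and $\BB_L^{\eps}$ inside the single ambient Hopf algebra $S(t_H)_{\Theta}$. The plan is to show that the generic cocycle $\sigma$ attached to $(H,\alpha)$ and the generic cocycle $\tau$ attached to $(L,\eps)$ have the same values, and likewise for their inverses; since $\BB_H^{\alpha}$ is by definition generated by the values of $\sigma$ and $\sigma^{-1}$, and $\BB_L^{\eps}$ by those of $\tau$ and $\tau^{-1}$, this immediately gives the claimed equality.

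First I would write out $\tau$ explicitly. By the definition in Section~\ref{base} applied to $(L,\eps)$,
\[
\tau(x,y) = t_{x_1} \, t_{y_1} \, \eps(x_2,y_2) \, t^{-1}_{x_3 * y_3}
= t_{x_1} \, t_{y_1} \, t^{-1}_{x_2 * y_2}
\]
for $x,y \in L = H$, where $*$ is the product~\eqref{prodL} of~$L$ and we have used $\eps(x_2,y_2) = \eps(x_2)\eps(y_2)$ together with the counit axioms. Now substitute the formula $x * y = \alpha(x_1,y_1) \, x_2 y_2 \, \alpha^{-1}(x_3,y_3)$ into $t^{-1}_{x_2 * y_2}$. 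Since $x \mapsto t^{-1}_x$ is linear on~$H$, the scalars $\alpha$ and $\alpha^{-1}$ come out and one obtains, after relabelling the Sweedler indices via coassociativity,
\[
\tau(x,y) = t_{x_1} \, t_{y_1} \, \alpha(x_2,y_2) \, t^{-1}_{x_3 y_3} \, \alpha^{-1}(x_4,y_4) \, .
\]
It remains to recognize the right-hand side as $\sigma(x,y)$. This is exactly the statement that the generic cocycle $\sigma$ of $(H,\alpha)$ is cohomologous to~$\alpha$ over $S(t_H)_{\Theta}$ via the linear form $x \mapsto t^{-1}_x$ (noted in Section~\ref{base}); concretely one checks the identity $t_{x_1}\,t_{y_1}\,\alpha(x_2,y_2)\,t^{-1}_{x_3y_3}\,\alpha^{-1}(x_4,y_4) = t_{x_1}\,t_{y_1}\,\alpha(x_2,y_2)\,t^{-1}_{x_3y_3}$ by showing the extra factor $\alpha^{-1}(x_4,y_4)$ can be absorbed --- more honestly, one compares the two sides directly using that $\sigma^{-1}(x,y) = t_{x_1 y_1}\,\alpha^{-1}(x_2,y_2)\,t^{-1}_{x_3}\,t^{-1}_{y_3}$, so that multiplying $\sigma(x,y)$ on the right by something built from $\alpha^{-1}$ lands one back inside the algebra generated by the $\sigma$'s. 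The cleanest route is simply to verify $\tau = \sigma$ as cocycles with values in $S(t_H)_{\Theta}$ by a Sweedler-notation computation; symmetrically $\tau^{-1} = \sigma^{-1}$, either by the same method or by invoking uniqueness of convolution inverses once $\tau = \sigma$ is known.

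The main obstacle is purely bookkeeping: keeping track of the iterated coproducts when substituting the formula for $*$ into $t^{-1}_{x_2 * y_2}$, since one must expand $\Delta^{(2)}$ or higher and use coassociativity carefully to align indices. There is no conceptual difficulty --- once the two generic cocycles are seen to coincide, both generic base algebras are by construction the subalgebra of $S(t_H)_{\Theta}$ generated by the same set of elements, hence equal. One should also remark that the antipode, coproduct and counit of $S(t_H)_{\Theta}$ play no role here: only the algebra structure of the ambient space and the coalgebra structure of~$H$ (which $L$ shares) enter, which is why the identification $S(t_L)_{\Theta} = S(t_H)_{\Theta}$ is unproblematic.
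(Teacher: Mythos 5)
Your reduction to comparing the two generic cocycles inside the common ambient algebra $S(t_H)_{\Theta}=S(t_L)_{\Theta}$ is the right framework, and your computation of $\tau$ is correct up to the point where you obtain
\[
\tau(x,y) = t_{x_1} \, t_{y_1} \, \alpha(x_2,y_2) \, t^{-1}_{x_3 y_3} \, \alpha^{-1}(x_4,y_4) \, .
\]
But the claim that this equals $\sigma(x,y)$ is false: the factor $\alpha^{-1}(x_4,y_4)$ cannot be ``absorbed.'' In convolution terms you have shown $\tau = \sigma * \alpha^{-1}$, i.e.\ $\tau(x,y)=\sigma(x_1,y_1)\,\alpha^{-1}(x_2,y_2)$, and since the bilinear map $(x,y)\mapsto t_{x_1}t_{y_1}\alpha(x_2,y_2)t^{-1}_{x_3y_3}$ is convolution invertible, $\tau=\sigma$ would force $\alpha=\eps$. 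A concrete counterexample: for $H=kG$ a group algebra with a nontrivial $2$-cocycle $\alpha:G\times G\to k^{\times}$ one gets $L=H$, $\sigma(g,h)=\alpha(g,h)\,t_g t_h t^{-1}_{gh}$ and $\tau(g,h)=t_g t_h t^{-1}_{gh}$, which differ by the nonzero scalar $\alpha(g,h)$. So the two generic cocycles do \emph{not} coincide, and ``uniqueness of convolution inverses'' cannot be invoked to get $\tau^{-1}=\sigma^{-1}$ either.

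The proposition nevertheless survives, and you are one step from the correct argument, which is the paper's: from $\sigma(x,y)=\tau(x_1,y_1)\,\alpha(x_2,y_2)$ each value of $\sigma$ is a finite $k$-linear combination of values of $\tau$, hence lies in $\BB_L^{\eps}$; convolving with $\alpha^{-1}$ gives $\tau(x,y)=\sigma(x_1,y_1)\,\alpha^{-1}(x_2,y_2)$, so each value of $\tau$ lies in $\BB_H^{\alpha}$. The analogous identity $\sigma^{-1}(x,y)=\alpha^{-1}(x_1,y_1)\,\tau^{-1}(x_2,y_2)$ (which must be checked separately, since the cocycles themselves are not equal) handles the inverses. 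Together these show the two generating sets span each other over $k$, so the generated subalgebras coincide. Replace ``$\tau=\sigma$'' by this two-way linear-combination argument and your proof is complete.
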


\begin{proof}
Let us denote the generic cocycle attached to~$\alpha$ by $\sigma_{\alpha}$
and the generic cocycle attached to~$\eps$ by $\sigma_{\eps}$.
Observe that for $x,y \in H$, we have
\begin{equation*}
\sigma_{\eps}(x,y) 
= t_{x_1} \, t_{y_1} \, \eps(x_2) \, \eps(y_2) \, t^{-1}_{x_3 * y_3}
=  t_{x_1} \, t_{y_1} \, t^{-1}_{x_2 * y_2} \, .
\end{equation*}
We then have
\begin{eqnarray*}
\sigma_{\alpha}(x,y) 
& = & t_{x_1} \, t_{y_1} \, \alpha(x_2, y_2) \, t^{-1}_{x_3 y_3} 
= t_{x_1} \, t_{y_1} \, t^{-1}_{\alpha(x_2, y_2) \, x_3 y_3} \\
& = & t_{x_1} \, t_{y_1} \, t^{-1}_{\alpha(x_2, y_2) \, x_3 y_3 \, \alpha^{-1}(x_4, y_4)} \, \alpha(x_5, y_5) \\
& = & t_{x_1} \, t_{y_1} \, t^{-1}_{x_2 * y_2} \, \alpha(x_3, y_3) \\
& = & \sigma_{\eps}(x_1,y_1)  \, \alpha(x_2, y_2) \, .
\end{eqnarray*}
Similarly,
$\sigma^{-1}_{\alpha}(x,y) = \alpha^{-1}(x_1, y_1) \, \sigma^{-1}_{\eps}(x_2,y_2)$.
This proves that the subalgebras $\BB_H^{\alpha}$ and~$\BB_L^{\eps}$ coincide.
\end{proof}

\begin{corollary}\label{coro-LH}
For any cocommutative Hopf algebra~$H$, we have $\BB_H^{\alpha} = \BB_H^{\eps}$.
\end{corollary}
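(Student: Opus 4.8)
The plan is to deduce Corollary~\ref{coro-LH} directly from Proposition~\ref{prop-reduction1} by showing that when $H$ is cocommutative, the cocycle deformation $L = {}^{\alpha}H^{\alpha^{-1}}$ coincides with $H$ as a Hopf algebra, not merely as a coalgebra. Indeed, Proposition~\ref{prop-reduction1} already gives $\BB_H^{\alpha} = \BB_L^{\eps}$ inside $S(t_H)_{\Theta}$, so it suffices to identify $L$ with $H$; then $\BB_L^{\eps} = \BB_H^{\eps}$ and we are done.

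First I would recall that the product of $L$ is given by~\eqref{prodL}, namely $x * y = \alpha(x_1,y_1)\, x_2 y_2\, \alpha^{-1}(x_3,y_3)$. Using cocommutativity of~$H$, one can reorganize the tensor factors so that the two-cocycle $\alpha$ and its convolution inverse $\alpha^{-1}$ are evaluated on the \emph{same} pair of arguments after suitable reindexing: concretely, cocommutativity lets us rewrite $\alpha(x_1,y_1)\,\alpha^{-1}(x_3,y_3)$ with $x_1, x_3$ (resp.\ $y_1, y_3$) playing interchangeable roles, so that the scalars collapse via $\alpha * \alpha^{-1} = \eps \otimes \eps$ in the convolution algebra, leaving $x * y = x y$. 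Equivalently, and perhaps more cleanly, one invokes the known fact that for a \emph{cocommutative} Hopf algebra every two-cocycle is a coboundary up to the trivial one in the relevant sense — more precisely, the deformed product agrees with the original because $\alpha(x_1,y_1)\,x_2y_2\,\alpha^{-1}(x_3,y_3) = \alpha(x_1,y_1)\,\alpha^{-1}(x_2,y_2)\,x_3y_3 = \eps(x_1)\eps(y_1)\,x_2y_2 = xy$, where the first equality uses cocommutativity to move $x_3y_3$ past $\alpha^{-1}$, and the second uses that $\alpha$ is convolution-invertible.

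I expect the main (and essentially only) obstacle to be making the reindexing step rigorous: one must be careful that cocommutativity is applied correctly to the iterated coproduct $\Delta^{(4)}(x) = x_1 \otimes \cdots \otimes x_5$ (and the analogous one for $y$), since $\alpha$ and $\alpha^{-1}$ are bilinear but not coalgebra maps, so the symmetry must be exploited at the level of the coalgebra arguments before the forms are applied. Once $L = H$ as Hopf algebras is established, the conclusion $\BB_H^{\alpha} = \BB_L^{\eps} = \BB_H^{\eps}$ is immediate from Proposition~\ref{prop-reduction1}.

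\begin{proof}
By Proposition~\ref{prop-reduction1}, we have $\BB_H^{\alpha} = \BB_L^{\eps}$ inside~$S(t_H)_{\Theta}$, where $L = {}^{\alpha}H^{\alpha^{-1}}$. It suffices to show that $L = H$ as Hopf algebras when $H$ is cocommutative. Since $L = H$ as a coalgebra by definition, we need only check that the product~$*$ of~$L$ given by~\eqref{prodL} coincides with that of~$H$. For $x, y \in H$, cocommutativity allows us to rewrite
\begin{equation*}
x * y = \alpha(x_1,y_1) \, x_2 y_2 \, \alpha^{-1}(x_3,y_3)
= \alpha(x_1,y_1) \, \alpha^{-1}(x_2,y_2) \, x_3 y_3
= \eps(x_1) \, \eps(y_1) \, x_2 y_2 = xy \, ,
\end{equation*}
where the second equality uses cocommutativity to commute the scalar $\alpha^{-1}(x_3,y_3)$ with $x_2 y_2$ (after reindexing the iterated coproduct), and the third uses that $\alpha$ is convolution-invertible. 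Hence $L = H$, so $\BB_L^{\eps} = \BB_H^{\eps}$, and therefore $\BB_H^{\alpha} = \BB_H^{\eps}$.
\end{proof}
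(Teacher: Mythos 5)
Your proof is correct and follows the same route as the paper: the paper's own proof of Corollary~\ref{coro-LH} consists precisely of the observation that, by cocommutativity, the product~\eqref{prodL} of~$L$ collapses to $x*y = xy$, so that $L=H$ and Proposition~\ref{prop-reduction1} applies. You merely spell out the convolution computation $\alpha(x_1,y_1)\,\alpha^{-1}(x_2,y_2) = \eps(x)\,\eps(y)$ that the paper leaves implicit, and that computation is carried out correctly.
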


\begin{proof}
Under the hypothesis, the product~\eqref{prodL} of~$L$ reduces to the product on~$H$:
$x*y = xy$ for all $x,y \in H$.
\end{proof}

Recall that there is a one-to-one correspondence $A \mapsto A^{\alpha}$ 
between the right $L$-comodule algebras
and the right $H$-comodule algebras; it is defined as follows: 
if $A$ is a right $L$-comodule algebras with coaction
$
a \mapsto a_0 \otimes a_1
$
($a \in A$), then $A^{\alpha}$ is $A$ as an $H$-comodule (which is the same as an $L$-comodule)
with the twisted product
\begin{equation*}
a *_{\alpha} b = a_0 \, b_0 \, \alpha(a_1,b_1) \, .
\end{equation*}

The following result for generic Galois extensions is a 
companion to Proposition~\ref{prop-reduction1}.

\begin{prop}\label{prop-reduction2}
(a) We have $\AA_H^{\alpha} = (\AA_L^{\eps})^{\alpha}$.

(b) For any field extension~$K/k$ there is a one-to-one correspondence 
between the $K$-forms of the $H$-comodule algebra~${}^{\alpha} H$
and the $K$-forms of the $L$-comodule algebra~$L$.
\end{prop}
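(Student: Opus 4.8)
The plan is to deduce Proposition~\ref{prop-reduction2} from the already-established correspondence $A \mapsto A^{\alpha}$ between right $L$-comodule algebras and right $H$-comodule algebras, together with Proposition~\ref{prop-reduction1}. For part~(a), I would simply unwind the definitions: since $\BB_H^{\alpha} = \BB_L^{\eps}$ by Proposition~\ref{prop-reduction1}, both $\AA_H^{\alpha}$ and $(\AA_L^{\eps})^{\alpha}$ are the same underlying comodule $\BB_H^{\alpha} \otimes H$, so it suffices to check that the products agree. On one side the product is $(b \otimes x)(c \otimes y) = bc\,\sigma_{\alpha}(x_1,y_1) \otimes x_2 y_2$; on the other side, applying the twisting $a *_{\alpha} b = a_0 b_0 \, \alpha(a_1, b_1)$ to the $L$-comodule algebra $\AA_L^{\eps} = \BB_L^{\eps} \otimes {}^{\sigma_{\eps}} L$ and using the identity $\sigma_{\alpha}(x,y) = \sigma_{\eps}(x_1,y_1)\,\alpha(x_2,y_2)$ proved inside Proposition~\ref{prop-reduction1}, one gets exactly the same expression. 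This is a routine but slightly fiddly Heyneman--Sweedler computation; the only care needed is to track that the $H$-coaction on $\AA_H^{\alpha}$ and the $L$-coaction on $\AA_L^{\eps}$ are literally the same map, which they are since $L = H$ as coalgebras.

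For part~(b), I would argue that the correspondence $A \mapsto A^{\alpha}$ is compatible with base field extension and with comodule-algebra isomorphisms, and that it sends ${}^{\alpha} H$-forms to $L$-forms and back. Concretely: the assignment $A \mapsto A^{\alpha}$ is functorial in $A$ (an isomorphism of $L$-comodule algebras $A \cong A'$ is automatically an isomorphism $A^{\alpha} \cong (A')^{\alpha}$, since it commutes with the coaction, hence with the twisted product), it is a bijection on isomorphism classes with inverse $B \mapsto B^{\alpha^{-1}}$, and for a field extension $K/k$ one has $(K \otimes_k A)^{K \otimes \alpha} = K \otimes_k A^{\alpha}$ where $K \otimes \alpha$ denotes the obvious $K$-valued cocycle. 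Since $L = {}^{\alpha} H^{\alpha^{-1}} = (H)^{\alpha^{-1}}$ read in the other direction, we have $L^{\alpha} = {}^{\alpha} H$, so the correspondence carries the class of $L$ to the class of ${}^{\alpha} H$. Therefore, given a $K$-form $K \otimes_k {}^{\beta} H$ of ${}^{\alpha} H$, one obtains a $K$-form $(K \otimes_k {}^{\beta} H)^{\alpha^{-1}}$ of $L$, and conversely; the defining property (existence of $L \supseteq K$ with an $L$-linear comodule-algebra isomorphism after extension of scalars) transfers along the correspondence because the twisting and the scalar extension commute.

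The main obstacle I anticipate is purely bookkeeping rather than conceptual: one must be scrupulous about \emph{which} cocycle lives over which Hopf algebra after twisting. A $K$-form of ${}^{\alpha}H$ is by definition of the shape $K \otimes_k {}^{\beta} H$ for a $K$-valued two-cocycle $\beta$ on $H$, whereas a $K$-form of the $L$-comodule algebra $L$ will be of the shape $K \otimes_k {}^{\gamma} L$ for a $K$-valued two-cocycle $\gamma$ on $L$; the correspondence must be shown to match these presentations, i.e.\ that $({}^{\gamma} L)^{\alpha}$ is again of the form ${}^{\beta} H$ for a suitable $\beta$ cohomologous data, using that $L$ and $H$ share the same coalgebra and that cocycles on $L$ pull back to cocycles on $H$ under the twist. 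Once this dictionary between cocycles on $H$ and cocycles on $L$ is set up cleanly, the proof of~(b) reduces to observing that every construction in sight---twisted product, extension of scalars, isomorphism of comodule algebras---is preserved, so I would present~(b) as a short consequence of the functoriality and base-change compatibility of $A \mapsto A^{\alpha}$, relegating the cocycle bookkeeping to a sentence or two.
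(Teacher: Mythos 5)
Your proposal is correct and follows essentially the same route as the paper: part (a) is exactly the paper's computation, unwinding the twisted product of $(\AA_L^{\eps})^{\alpha}$ and collapsing it via the identity $\sigma_{\alpha}(x,y)=\sigma_{\eps}(x_1,y_1)\,\alpha(x_2,y_2)$ from the proof of Proposition~\ref{prop-reduction1}. For part (b) the paper says only that ``the above equivalence is compatible with base extension''; your version spells out the same idea in more detail (functoriality of $A\mapsto A^{\alpha}$, the fact that $L^{\alpha}={}^{\alpha}H$, and the dictionary between cocycles $\gamma$ on $L$ and $\beta=\gamma*\alpha$ on $H$), all of which is accurate and merely makes explicit what the paper leaves implicit.
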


\begin{proof}
(a) By definition, $\AA_H^{\alpha} = \BB_H^{\alpha} \otimes {}^{\sigma_{\alpha}} H$
and $\AA_L^{\eps} = \BB_L^{\eps} \otimes {}^{\sigma_{\eps}} H$.
Let $b,c \in \BB_L^{\eps} = \BB_H^{\alpha}$ and $x,y \in H=L$.
Let us compute the product $(b\otimes x) (c\otimes y)$ in~$(\AA_L^{\eps})^{\alpha}$.
We have
\begin{eqnarray*}
(b\otimes x) *_{\alpha} (c\otimes y) 
& = & bc \, \sigma_{\eps}(x_1, y_1) \otimes x_2 * y_2 \, \alpha(x_3,y_3) \\
& = & bc \, \sigma_{\eps}(x_1, y_1) \otimes \alpha(x_2,y_2) \, x_3 y_3 \, 
\alpha^{-1}(x_4,y_4)\, \alpha(x_5,y_5) \\
& = & bc \, \sigma_{\eps}(x_1, y_1) \, \alpha(x_2,y_2) \otimes x_3 y_3 \\
& = & bc \, \sigma_{\alpha}(x_1, y_1) \otimes x_2 y_2 \, ,
\end{eqnarray*}
which is the product in~$\AA_H^{\alpha}$.
For the last equality, we have used the formula expressing~$\sigma_{\alpha}$
in terms of~$\sigma_{\eps}$ established in the proof of Proposition~\ref{prop-reduction1}.

(b) The above equivalence is compatible with base extension.
\end{proof}

\begin{rems}
(a) Corollary~\ref{coro-LH} holds under the following weaker condition.
Recall that a two-cocycle~$\alpha$ on~$H$ is \emph{lazy} 
in the sense of~\cite{BC} if for all $x,y \in H$,
\begin{equation*}
\alpha(x_1, y_1) \, x_2 y_2 =  \alpha(x_2, y_2) \, x_1 y_1\, .
\end{equation*}
If $\alpha$ is lazy, then
obviously the Hopf algebra ${}^{\alpha} H ^{\alpha^{-1}}$ coincides with~$H$.
This includes the case where $H$ is cocommutative
since all two-cocycles on a cocommutative Hopf algebra are obviously lazy.

(b) In the case of a general Hopf algebra~$H$,
one can check that if $\alpha$ and~$\beta$ are cohomologous two-cocycles,
then the twisted Hopf algebras ${}^{\alpha} H ^{\alpha^{-1}}$
and~${}^{\beta} H ^{\beta^{-1}}$ are isomorphic, the isomorphism
$f: {}^{\alpha} H ^{\alpha^{-1}} \to {}^{\beta} H ^{\beta^{-1}}$ being given by
\begin{equation*}
f(x) = \lambda^{-1}(x_1) \, x_2 \, \lambda(x_3)\, ,
\end{equation*}
where $\lambda$, $\lambda^{-1}$ are the linear forms appearing in~\eqref{cohomologous};
see also~\cite[Prop.~5.4]{AK}.

(c) It follows from the previous discussion that if $H$ is such that
any two-cocycle on~$H$ is cohomologous to a lazy one, then
${}^{\alpha} H ^{\alpha^{-1}} \cong H$ for any~$\alpha$.
This holds for instance for Sweedler's four-dimensional Hopf algebra.

(d) Examples of cocycles $\alpha$ for which the Hopf algebras $H$ and $^{\alpha}H^{\alpha^{-1}}$ 
are not isomorphic can be drawn from the proofs of~\cite[Th.~4.5]{KS} or~\cite[Th.~4.3]{Ma2}.
These examples include the quantized enveloping algebras and the naturally associated graded pointed Hopf algebras.
\end{rems}

\section{Flatness and freeness results}\label{results}

From now on we consider a Hopf algebra~$H$ together with the trivial cocycle~$\eps$.
As we have seen in the proof of Proposition~\ref{prop-reduction1}, 
the generic cocycle~$\sigma$ attached to~$\eps$ and its inverse~$\sigma^{-1}$ are given 
for all $x,y \in H$ by
\begin{equation}\label{triv-generic}
\sigma(x,y) 
=  t_{x_1} \, t_{y_1} \, t^{-1}_{x_2 y_2} 
\quad\text{and}\quad
\sigma^{-1}(x,y) = t_{x_1 y_1} \, t^{-1}_{x_2}  \, t^{-1}_{y_2} \, .
\end{equation}

To simplify notation, we write~$\BB_H$ for~$\BB_H^{\eps}$,
and~$\AA_H$ for~$\AA_H^{\eps}$.

\subsection{On the subalgebra~$\BB_H$}\label{structure}

We give some general properties of~$\BB_H$ as a subalgebra of~$S(t_H)_{\Theta}$,
including flatness.

Let us start by analyzing the difference between the two algebras. To this end
we consider the ideal~$(\BB_H^+)$ of~$S(t_H)_{\Theta}$
generated by the augmentation ideal~$\BB_H^+ = \BB_H \cap \Ker(\eps)$. Let 
\begin{equation*}
S(t_H)_{\Theta}/(\BB_H^+)
\end{equation*}
be the quotient algebra. 

In order to identify this quotient, 
we consider the largest commutative quotient algebra~$H_{\ab}$ of~$H$:
this is the quotient of~$H$ by the two-sided ideal generated by all commutators
$xy - yx$ ($x,y \in H$). It is easily checked that $H_{\ab}$ is a quotient Hopf algebra of~$H$;
another way to see this is to observe that $H_{\ab}$ represents the group-valued functor
\[
R \longmapsto \Alg_k(H,R)
\]
on the category of commutative algebras~$R$; see~\cite{Wa}.

We can now state the following.

\begin{prop}\label{quotient}
The canonical coalgebra morphism $t_H \to H \, ; \; t_x \mapsto x$, 
composed with the projection $H \to H_{\ab}$, 
induces a Hopf algebra epimorphism
$S(t_H)_{\Theta} \to H_{\ab}$.  
This epimorphism has~$(B_H)^+$ as kernel.
Therefore, $(B_H)^+$  is a Hopf ideal of ~$S(t_H)_{\Theta}$, and we have
an isomorphism 
\[
S(t_H)_{\Theta}/(\BB_H^+) \cong H_{\ab} 
\]
of commutative Hopf algebras.
\end{prop}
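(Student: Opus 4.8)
The plan is to construct the epimorphism $\pi\colon S(t_H)_\Theta\to H_{\ab}$ explicitly and then identify its kernel as the ideal $(\BB_H^+)$. First I would observe that the composite $t_H\to H\to H_{\ab}$ is a coalgebra morphism from $t_H$ to the commutative Hopf algebra $H_{\ab}$; by the universal property of $S(t_H)_\Theta$ recalled in Section~\ref{Takeuchi}, it extends uniquely to a Hopf algebra morphism $\pi\colon S(t_H)_\Theta\to H_{\ab}$ with $\pi(t_x)=\bar x$ and $\pi(t^{-1}_x)=S(\bar x)$, where $\bar x$ denotes the image of $x\in H$ in $H_{\ab}$. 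Surjectivity of $\pi$ is immediate since $H\to H_{\ab}$ is onto and the elements $\bar x$ already generate $H_{\ab}$ as an algebra.

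Next I would compute $\pi$ on the generators of $\BB_H$. Using the formulas~\eqref{triv-generic} for the generic cocycle attached to~$\eps$ and the fact that $H_{\ab}$ is commutative, one gets
\[
\pi\bigl(\sigma(x,y)\bigr)=\bar x_1\,\bar y_1\,S(\overline{x_2y_2})=\bar x_1\,\bar y_1\,S(\bar y_2)S(\bar x_2)=\eps(x)\eps(y)\,1,
\]
and likewise $\pi(\sigma^{-1}(x,y))=\eps(x)\eps(y)\,1$. Hence $\pi$ kills $\BB_H^+$, so it factors through $S(t_H)_\Theta/(\BB_H^+)$, giving a surjection $\bar\pi\colon S(t_H)_\Theta/(\BB_H^+)\to H_{\ab}$. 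It remains to produce an inverse. For this I would build a Hopf algebra morphism $H_{\ab}\to S(t_H)_\Theta/(\BB_H^+)$: the coalgebra map $x\mapsto t_x \bmod (\BB_H^+)$ lands in a commutative Hopf algebra, so by Takeuchi's universal property it gives a Hopf algebra map $S(t_H)_\Theta\to S(t_H)_\Theta/(\BB_H^+)$, which is just the quotient map; the point is that in this quotient the relation $\sigma(x,y)\equiv\eps(x)\eps(y)$ forces $t_{x_1}t_{y_1}\equiv t_{x_2y_2}$, i.e. the images of the $t_x$ satisfy the multiplicativity relations defining $H_{\ab}$, and moreover $t_{xy-yx}\equiv 0$ because the quotient is commutative. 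So the map $H\to S(t_H)_\Theta/(\BB_H^+)$, $x\mapsto t_x$, is an algebra morphism that kills all commutators, hence factors through $H_{\ab}$, and one checks it is inverse to $\bar\pi$ on generators.

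The main obstacle is the second direction: showing that $(\BB_H^+)$ is \emph{all} of $\Ker\pi$, equivalently that the relations $t_{x_1}t_{y_1}=t_{x_2y_2}$ (modulo $(\BB_H^+)$) together with the grouplike-type relation on the $t^{-1}_x$ already suffice to collapse $S(t_H)_\Theta$ down to $H_{\ab}$ — no further relations are hidden in $\Ker\pi$. The clean way to handle this is precisely the inverse-map construction above: once one has mutually inverse Hopf algebra morphisms between $S(t_H)_\Theta/(\BB_H^+)$ and $H_{\ab}$, the equality of kernels is automatic. The only calculation needing care is verifying that $x\mapsto t_x \bmod (\BB_H^+)$ is multiplicative, which amounts to checking $t_{x_1}t_{y_1}\,t^{-1}_{x_2y_2}\in\BB_H^+$ (true, as it is a value of~$\sigma$) and then applying $\eps$-counit arguments to pass from $t_{x_1}t_{y_1}t^{-1}_{x_2y_2}\equiv\eps(x)\eps(y)$ to $t_{x_1}t_{y_1}\equiv t_{xy}$; this uses the antipode relations~\eqref{ant-comm-coalg} and coassociativity in the now-commutative quotient. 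Finally, since $(\BB_H^+)$ is the kernel of a Hopf algebra map, it is automatically a Hopf ideal, giving the last assertion.
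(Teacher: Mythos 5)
Your proposal is correct and follows essentially the same route as the paper: one direction comes from the universal property of $S(t_H)_{\Theta}$ applied to $t_x\mapsto \bar x$ together with the computation $\bar x_1\,\bar y_1\,S(\overline{x_2y_2})=\eps(x)\eps(y)$ showing $\BB_H^+$ is killed, and the other from checking that $x\mapsto t_x \bmod (\BB_H^+)$ is multiplicative (via $\sigma(x_1,y_1)\,t_{x_2y_2}=t_xt_y$) and lands in a commutative algebra, hence factors through $H_{\ab}$. The two maps are then mutually inverse, exactly as in the paper's proof.
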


\begin{proof}
For $x\in H$ let $t'_x$ be the image of~$t_x$ in~$Q = S(t_H)_{\Theta}/(\BB_H^+)$.
Since $t_{x_1} \, t_{y_1} \, t^{-1}_{x_2 y_2} - \eps(x)\eps(y)$ belongs to~$\BB_H^+$,
we have
\[
t'_{xy} = t'_x \, t'_y \in Q
\]
for all $x, y\in H$.
The algebra morphism $H \to Q \, ; \, x\mapsto t'_x$ takes values in a commutative algebra; 
therefore, it factors through~$H_{\ab}$. 
It thus induces an algebra morphism $H_{\ab} \to Q$. 

On the other hand, by the universal property of~$S(t_H)_{\Theta}$, 
the canonical Hopf algebra morphism $H \to H_{\ab}\, ; \, x\mapsto \bar x$ 
gives rise to a Hopf algebra morphism $S(t_H)_{\Theta} \to H_{\ab}$
induced by $t_x \mapsto \bar x$. 
Since $\overline{x_1} \,\overline{y_1} \, S(\overline{x_2 y_2}) =  \eps(x)\eps(y)$ in~$H_{\ab}$,
the morphism $t_x \mapsto \bar x$ vanishes on~$\BB_H^+$, yielding a Hopf algebra morphism
$Q \to H_{\ab}$. 
It is easy to chek that the latter is the inverse of the morphism $H_{\ab} \to Q$ constructed above.
\end{proof}

We next identify a generating set for~$S(t_H)_{\Theta}$ as a $\BB_H$-module.

\begin{lemma}\label{generators}
As a $\BB_H$-module, $S(t_H)_{\Theta}$ is generated by
the elements~$t_x$, where $x$~runs over~$H$.
\end{lemma}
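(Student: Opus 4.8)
The plan is to show that the $\BB_H$-submodule of $S(t_H)_{\Theta}$ generated by all the $t_x$ (together with~$1$) is in fact a subalgebra, hence all of $S(t_H)_{\Theta}$. Write $M$ for this submodule. Since $S(t_H)_{\Theta}$ is generated as an algebra by the elements $t_x$ and $t^{-1}_x$ ($x\in H$), it suffices to check two things: first, that $M$ is closed under multiplication, and second, that $M$ contains each $t^{-1}_x$.

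For closure under multiplication, the key identity is the one already recorded in~\eqref{triv-generic}: $t_{x_1}\, t_{y_1}\, t^{-1}_{x_2 y_2} = \sigma(x,y) \in \BB_H$. First I would rewrite this, using the defining relation $t_{z_1}\, t^{-1}_{z_2} = \eps(z)\, 1$ applied to $z = xy$, to solve for the product $t_x t_y$ in terms of $\BB_H$-coefficients times a single generator~$t_{(xy)}$: namely one obtains $t_x\, t_y = \sigma(x_1,y_1)\, t_{x_2 y_2}$. Indeed, multiplying $\sigma(x_1,y_1)\, t_{x_2 y_2} = t_{x_1} t_{y_1} t^{-1}_{x_2 y_2} t_{x_3 y_3}$ and collapsing the last two factors via $t^{-1}_{z_1} t_{z_2} = \eps(z)1$ gives back $t_x t_y$. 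Hence $t_x\cdot t_y \in \BB_H\, t_{x_2 y_2} \subseteq M$, so $M$ is a subalgebra once we also know it is stable under $\BB_H$-multiplication, which is immediate.

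It remains to show $t^{-1}_x \in M$. For this I would use~$\sigma^{-1}$ from~\eqref{triv-generic}: $t_{x_1 y_1}\, t^{-1}_{x_2}\, t^{-1}_{y_2} = \sigma^{-1}(x,y) \in \BB_H$. Specialize cleverly — the natural move is to exploit the antipode. Apply the relation with a suitable pairing so that one of the two inverse factors collapses against a $t$. Concretely, since $S(t_H)_{\Theta}$ is a Hopf algebra with $S(t_x) = t^{-1}_x$, and since $t_{x_1}\, t^{-1}_{x_2} = \eps(x)1$, one can write $t^{-1}_x = \eps(x_1)\, t^{-1}_{x_2} = t_{S(x_1)_1}\, t^{-1}_{S(x_1)_2}\, t^{-1}_{x_2}$ — no, more directly: using $t_{z_1} t^{-1}_{z_2} = \eps(z)1$ with $z$ a product, together with the coalgebra structure of $H$, reduce $t^{-1}_x$ to $\sigma^{-1}$-coefficients times elements $t_w$. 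The cleanest route: from $\sigma^{-1}(x,S(y))\in\BB_H$ and the identity $t_{x_1 S(x_2)} = t_1 = 1$ one gets, after applying the coproduct of $H$ appropriately, $t^{-1}_x \in \sum \BB_H\, t_{S(x_i)} \subseteq M$.

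The main obstacle is the bookkeeping in this last step: choosing the right Sweedler indices so that the three-fold product in $\sigma^{-1}$ telescopes down to leave exactly one surviving $t$-generator with $\BB_H$-coefficients, rather than a $t^{-1}$. Everything else — closure under the $\BB_H$-action, and the passage from "$M$ is a subalgebra containing all $t_x$ and all $t^{-1}_x$" to "$M = S(t_H)_{\Theta}$" — is formal once the two displayed identities for $t_x t_y$ and $t^{-1}_x$ are in hand.
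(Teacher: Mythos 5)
Your treatment of the product $t_x\,t_y$ is correct and coincides with the paper's argument: $t_x\,t_y=\sigma(x_1,y_1)\,t_{x_2y_2}\in\BB_H\,t_{x_2y_2}$. The gap is in the second half: you never actually prove that $t^{-1}_x$ lies in $M$, and that is the only nontrivial point of the lemma. You sketch a route, abandon it mid-sentence, propose another in vague terms (``after applying the coproduct of~$H$ appropriately''), and end by conceding that the bookkeeping is ``the main obstacle'' --- but that bookkeeping \emph{is} the proof. Moreover your ``cleanest route'' rests on the identity $t_{x_1S(x_2)}=t_1=1$, and $t_1\neq 1$: in $S(t_H)$ the element $t_1$ is a degree-one generator of the symmetric algebra, not the unit (what is true is that $t_1$ is invertible, with $t^{-1}_1=\sigma^{-1}(1,1)\in\BB_H$). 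Note also that $\sigma^{-1}(x,y)$ is a product of one $t$ and \emph{two} $t^{-1}$'s, so collapsing one $t^{-1}$ against the $t$ still leaves a $t^{-1}$, not a $t$; producing a surviving $t$-generator with $\BB_H$-coefficients requires an extra manoeuvre that your sketch does not supply.

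The missing argument is the following. First, $t^{-1}_1=\sigma^{-1}(1,1)\in\BB_H$. Next, the element $t^{-1}_{S(x_1)}\,t^{-1}_{x_2}$ lies in~$\BB_H$, because
\[
\sigma^{-1}(S(x_1),x_2)\,t^{-1}_1
= t_{S(x_2)x_3}\,t^{-1}_{S(x_1)}\,t^{-1}_{x_4}\,t^{-1}_1
= \eps(x_2)\,t_1\,t^{-1}_{S(x_1)}\,t^{-1}_{x_3}\,t^{-1}_1
= t^{-1}_{S(x_1)}\,t^{-1}_{x_2}\,,
\]
and the left-hand side is a product of two elements of~$\BB_H$. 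Finally,
\[
t^{-1}_x=\eps(S(x_1))\,t^{-1}_{x_2}
= t_{S(x_1)}\,\bigl(t^{-1}_{S(x_2)}\,t^{-1}_{x_3}\bigr)\in\sum_{y\in H}\BB_H\,t_y\,.
\]
So the correct mechanism is not to telescope $\sigma^{-1}$ directly, but to first manufacture the auxiliary $\BB_H$-element $t^{-1}_{S(x_1)}\,t^{-1}_{x_2}$ (from $\sigma^{-1}$ evaluated at $(S(x_1),x_2)$, times $t^{-1}_1$) and only then pair it with the single generator $t_{S(x_1)}$. Your overall strategy --- show the submodule generated by the $t_x$ is a subalgebra containing all $t^{\pm1}_x$ --- is the right one, but without this computation the lemma is not established.
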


\begin{proof}
Let $M$ be the $\BB_H$-submodule of~$S(t_H)_{\Theta}$
generated by the elements~$t_x$ ($x\in H$).
We wish to prove that $M = S(t_H)_{\Theta}$.
Since $S(t_H)_{\Theta}$ is generated by the elements~$t^{\pm 1}_x$
as an algebra, it suffices to check that each product $t_x \, t_y$ and each inverse~$t^{-1}_x$
are linear combinations of elements~$t_z$ ($z\in H$) with coefficients in~$\BB_H$.
For the product $t_x \, t_y$, we have
\begin{equation*}\label{txty}
\sigma(x_1,y_1) \, t_{x_2y_2}
= t_{x_1} \, t_{y_1} \, t^{-1}_{x_2y_2} \,  t_{x_3y_3} 
= t_{x_1} \, t_{y_1} \, \eps (x_2) \, \eps(y_2) 
= t_x \, t_y  \, .
\end{equation*}
This shows that $t_x t_y$ belongs to~$M$.

Let us now check that $t^{-1}_x$ belongs to~$M$ for all $x\in H$.
By~\cite[Lemma~5.1]{AK} we have $t^{-1}_1 = \sigma^{-1}(1,1) \in \BB_H$.
Next, we claim that $t^{-1}_{S(x_1)} \, t^{-1}_{x_2}$ belongs to~$\BB_H$.
Indeed, the element $\sigma^{-1}(S(x_1),x_2) \, t^{-1}_1$ of~$\BB_H$ 
can be rewritten as
\begin{eqnarray*}
\sigma^{-1}(S(x_1),x_2) \, t^{-1}_1
& = & t_{S(x_2) x_3} \, t^{-1}_{S(x_1)}  \, t^{-1}_{x_4} \, t^{-1}_1 \\
& = & \eps(x_2) \, t_1 \, t^{-1}_{S(x_1)}  \, t^{-1}_{x_3} \, t^{-1}_1 \\
& = & t^{-1}_{S(x_1)}  \, t^{-1}_{x_2} \, ,
\end{eqnarray*}
which proves the claim. Using the latter, we obtain
\begin{equation}\label{tinv}
t^{-1}_x = \eps(x_1) \, t^{-1}_{x_2} = \eps(S(x_1)) \, t^{-1}_{x_2} 
= t_{S(x_1)} \, (t^{-1}_{S(x_2)} \, t^{-1}_{x_3})
\in \sum_{y\in H} \, \BB_H \, t_y \, .
\end{equation}
This shows that~$t^{-1}_x$ belongs to~$M$.  
\end{proof}

Let us compute the coproduct and the counit on the values of~$\sigma$.

\begin{lemma}\label{coprod-generic}
In the commutative Hopf algebra~$S(t_H)_{\Theta}$ we have
\begin{equation*}
\Delta \bigl( \sigma(x,y) \bigr) = t_{x_1} \, t_{y_1} \, t^{-1}_{x_3 y_3} \otimes  \sigma(x_2,y_2) \, ,
\end{equation*}
\begin{equation*}
\Delta \bigl( \sigma^{-1}(x,y) \bigr) 
=  t_{x_1y_1} \, t^{-1}_{x_3} \, t^{-1}_{y_3} \otimes  \sigma^{-1}(x_2,y_2) \, ,
\end{equation*}
and 
\begin{equation*}
\eps( \sigma(x,y) )  =  \eps( \sigma^{-1}(x,y) ) = \eps(x) \, \eps(y)
\end{equation*}
for all $x,y \in H$.
\end{lemma}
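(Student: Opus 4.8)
The plan is to verify the three identities by a direct computation, applying the algebra morphisms $\Delta$ and $\eps$ of $S(t_H)_{\Theta}$ to the explicit formulas~\eqref{triv-generic} for $\sigma$ and $\sigma^{-1}$. The only facts needed are $\Delta(t_x)=t_{x_1}\otimes t_{x_2}$, $\Delta(t^{-1}_x)=t^{-1}_{x_2}\otimes t^{-1}_{x_1}$ and $\eps(t_x)=\eps(t^{-1}_x)=\eps(x)$ from~\eqref{comm-coalg} and~\eqref{comm-Hopf}, together with the multiplicativity of $\Delta_H$ and $\eps$ on $H$, which gives $\Delta_H(x_iy_i)=x_iy_i\otimes x_{i+1}y_{i+1}$ and $\eps(x_iy_i)=\eps(x_i)\eps(y_i)$. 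To keep the Sweedler indices under control I would fix once and for all the convention $\Delta^{(3)}(x)=x_1\otimes x_2\otimes x_3\otimes x_4$ (and similarly for $y$) rather than refining indices step by step.

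For the first identity I would expand $\Delta\bigl(\sigma(x,y)\bigr)=\Delta(t_{x_1})\,\Delta(t_{y_1})\,\Delta(t^{-1}_{x_2y_2})$. Here $\Delta(t^{-1}_{x_2y_2})$ equals, after passing to the four-index convention, $t^{-1}_{x_4y_4}\otimes t^{-1}_{x_3y_3}$, the essential point being to respect the order reversal in the coproduct of $t^{-1}$. Multiplying the three factors in $S(t_H)_{\Theta}\otimes S(t_H)_{\Theta}$ and regrouping, the left tensorand collects to $t_{x_1}t_{y_1}t^{-1}_{x_4y_4}$ and the right one to $t_{x_2}t_{y_2}t^{-1}_{x_3y_3}=\sigma(x_2,y_2)$, which upon rewriting back in the three-index notation is exactly the claimed formula. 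The identity for $\sigma^{-1}(x,y)=t_{x_1y_1}t^{-1}_{x_2}t^{-1}_{y_2}$ is obtained the same way, using $\Delta(t_{x_1y_1})=t_{x_1y_1}\otimes t_{x_2y_2}$ and $\Delta(t^{-1}_{x_3})=t^{-1}_{x_4}\otimes t^{-1}_{x_3}$; alternatively, since $\Delta$ is an algebra morphism the maps $\Delta\circ\sigma$ and $\Delta\circ\sigma^{-1}$ are mutually convolution inverse, so it would suffice to check that the proposed expression for $\Delta\bigl(\sigma^{-1}(x,y)\bigr)$ inverts the one just found for $\Delta\bigl(\sigma(x,y)\bigr)$. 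For the counit, applying $\eps$ to~\eqref{triv-generic} and using $\eps(t^{\pm1}_x)=\eps(x)$, the multiplicativity of $\eps$ on $H$ and the counit axiom $\eps(x_1)\eps(x_2)=\eps(x)$ yields $\eps\bigl(\sigma(x,y)\bigr)=\eps(x_1)\eps(y_1)\eps(x_2)\eps(y_2)=\eps(x)\eps(y)$, and the same for $\sigma^{-1}$.

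The computation is routine; the only place where care is required — and the one source of potential error — is the Sweedler-index bookkeeping, namely the order reversal $\Delta(t^{-1}_x)=t^{-1}_{x_2}\otimes t^{-1}_{x_1}$ and the relation $\Delta_H(x_iy_i)=x_iy_i\otimes x_{i+1}y_{i+1}$ coming from the multiplicativity of $\Delta_H$. Fixing the four-fold coproduct convention at the outset, as indicated above, sidesteps even this minor difficulty.
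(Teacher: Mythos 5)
Your proposal is correct and coincides with the paper's own proof: both expand $\Delta\bigl(\sigma(x,y)\bigr)$ as $\Delta(t_{x_1})\,\Delta(t_{y_1})\,\Delta(t^{-1}_{x_3y_3})$, use the order-reversed coproduct of $t^{-1}$, regroup the tensor factors, and identify the right-hand tensorand with $\sigma(x_2,y_2)$, with the counit formulas following from $\eps(t^{\pm1}_x)=\eps(x)$. The only cosmetic difference is your explicit fixing of a four-index Sweedler convention, which the paper handles implicitly.
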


\begin{proof}
Let us check the first formula. 
By~\eqref{comm-coalg} and~\eqref{comm-Hopf}, 
\begin{eqnarray*}
\Delta \bigl( \sigma(x,y) \bigr) 
& = & \Delta(t_{x_1}) \, \Delta(t_{y_1}) \, \Delta(t^{-1}_{x_3 y_3}) \\
& = & (t_{x_1} \otimes t_{x_2}) \, (t_{y_1} \otimes t_{y_2}) \, 
(t^{-1}_{x_4 y_4} \otimes t^{-1}_{x_3 y_3}) \\
& = & t_{x_1} \, t_{y_1} \, t^{-1}_{x_4 y_4}\otimes t_{x_2} \, t_{y_2} \, t^{-1}_{x_3 y_3} \\
& = & t_{x_1} \, t_{y_1} \, t^{-1}_{x_3 y_3}\otimes \sigma(x_2,y_2) \, .
\end{eqnarray*}
A similar computation yields the remaining formulas.
\end{proof}

As a consequence, we obtain the following.

\begin{prop}\label{coideal}
For any Hopf algebra~$H$, 
the commutative algebra~$\BB_H$ is a left coideal subalgebra of~$S(t_H)_{\Theta}$.

If in addition $H$ is cocommutative, then $\BB_H$ is a Hopf subalgebra of~$S(t_H)_{\Theta}$.
\end{prop}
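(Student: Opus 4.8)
The plan is to verify the two assertions using the coproduct and counit formulas for $\sigma$ and $\sigma^{-1}$ computed in Lemma~\ref{coprod-generic}. Recall that a \emph{left coideal subalgebra} of a Hopf algebra $K$ is a subalgebra $\BB$ with $\Delta(\BB) \subseteq K \otimes \BB$. Since $\BB_H$ is generated as an algebra by the elements $\sigma(x,y)$ and $\sigma^{-1}(x,y)$ for $x,y \in H$, and since $\Delta$ is an algebra morphism, it suffices to check the coideal condition on these generators. But this is exactly what Lemma~\ref{coprod-generic} delivers: the first formula reads $\Delta(\sigma(x,y)) = t_{x_1}\, t_{y_1}\, t^{-1}_{x_3 y_3} \otimes \sigma(x_2,y_2)$, whose right-hand tensor factor $\sigma(x_2,y_2)$ lies in $\BB_H$; similarly the second formula shows $\Delta(\sigma^{-1}(x,y)) \in S(t_H)_{\Theta} \otimes \BB_H$. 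Hence $\Delta(\BB_H) \subseteq S(t_H)_{\Theta} \otimes \BB_H$, which is the first claim.

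For the second claim, assume $H$ is cocommutative. To show $\BB_H$ is a Hopf subalgebra we must check it is stable under $\Delta$ (now landing in $\BB_H \otimes \BB_H$), under $\eps$, and under the antipode $S$ of $S(t_H)_{\Theta}$. Stability under $\eps$ is immediate from the third formula of Lemma~\ref{coprod-generic}, since $\eps(\sigma(x,y)) = \eps(\sigma^{-1}(x,y)) = \eps(x)\eps(y) \in k$. For $\Delta$, cocommutativity of $H$ lets me rewrite the left-hand tensor factor in the coproduct formula: $t_{x_1}\, t_{y_1}\, t^{-1}_{x_3 y_3}$ equals $t_{x_1}\, t_{y_1}\, t^{-1}_{x_2 y_2} = \sigma(x,y)$ after reindexing (because when $H$ is cocommutative one may freely permute the legs $x_1, x_2, x_3$ among themselves, and likewise the $y_i$), so that $\Delta(\sigma(x,y)) = \sigma(x_1, y_1) \otimes \sigma(x_2, y_2) \in \BB_H \otimes \BB_H$; the analogous rewriting handles $\sigma^{-1}$. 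For the antipode, recall from~\eqref{ant-comm-coalg} that $S(t_z) = t^{-1}_z$ and $S(t^{-1}_z) = t_z$, and $S$ is an algebra anti-endomorphism of the commutative algebra $S(t_H)_{\Theta}$, hence an algebra endomorphism; applying $S$ to $\sigma(x,y) = t_{x_1}\, t_{y_1}\, t^{-1}_{x_2 y_2}$ gives $t^{-1}_{x_1}\, t^{-1}_{y_1}\, t_{x_2 y_2}$, which after using cocommutativity to reorder the legs is seen to be $\sigma^{-1}$ evaluated on suitable arguments (indeed $S(\sigma(x,y)) = \sigma^{-1}(x_1,y_1)$ up to reindexing, by comparison with the formula $\sigma^{-1}(x,y) = t_{x_1 y_1}\, t^{-1}_{x_2}\, t^{-1}_{y_2}$ from~\eqref{triv-generic} and cocommutativity), and symmetrically $S(\sigma^{-1}(x,y)) \in \BB_H$. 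Thus $\BB_H$ is closed under $S$ as well.

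The only genuine subtlety is the bookkeeping with Sweedler indices in the cocommutative case: one must be careful that "reorder the legs" is legitimate, i.e.\ that cocommutativity of $\Delta$ on $H$ really does let one identify, say, $t_{x_1} t_{y_1} t^{-1}_{x_3 y_3}$ (with three legs of $x$) with $\sigma(x,y)$ (with two legs of $x$). This works because $\sigma$ factors through $H \otimes H$ and cocommutativity means the symmetric group acts trivially on iterated coproducts; concretely, one applies $\id \otimes \eps$ to collapse the extra leg after transposing it into counit position. I would spell this out once, e.g.\ writing $t_{x_1} t_{y_1} t^{-1}_{x_3 y_3} = t_{x_1} t_{y_1} t^{-1}_{x_2 y_2} \eps(x_3)\eps(y_3)$ via cocommutativity, and then leave the remaining verifications to the reader as "a similar computation". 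Everything else is a direct citation of Lemma~\ref{coprod-generic}, the definition of $\BB_H$ as the algebra generated by values of $\sigma$ and $\sigma^{-1}$, and the antipode formulas~\eqref{ant-comm-coalg}.
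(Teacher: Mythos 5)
Your proposal is correct and follows essentially the same route as the paper: cite Lemma~\ref{coprod-generic} on the generators for the left coideal property, use cocommutativity to permute the Sweedler legs and rewrite $\Delta(\sigma(x,y))=\sigma(x_1,y_1)\otimes\sigma(x_2,y_2)$ (and likewise for $\sigma^{-1}$), then compute $S(\sigma(x,y))=\sigma^{-1}(x,y)$ directly from~\eqref{ant-comm-coalg}. The only cosmetic difference is that the paper obtains $S(\sigma^{-1}(x,y))=\sigma(x,y)$ from involutivity of the antipode on a cocommutative Hopf algebra rather than by your symmetric computation, and your "$\sigma^{-1}(x_1,y_1)$ up to reindexing" should just read $\sigma^{-1}(x,y)$; neither affects correctness.
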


\begin{proof}
Since $\BB_H$ is generated by the values of~$\sigma$ and~$\sigma^{-1}$, the first assertion
is a consequence of Lemma~\ref{coprod-generic}.

When $H$ is cocommutative, then so is~$S(t_H)_{\Theta}$,
and $\Delta \bigl( \sigma(x,y) \bigr)$ can be rewritten as
\begin{equation*}
\Delta \bigl( \sigma(x,y) \bigr) 
=  t_{x_1} \, t_{y_1} \, t^{-1}_{x_2 y_2} \otimes  \sigma(x_3,y_3)
=  \, \sigma(x_1,y_1) \otimes  \sigma(x_2,y_2) .
\end{equation*}
Similarly, we have
\begin{equation*}
\Delta \bigl( \sigma^{-1}(x,y) \bigr) 
=  \sigma^{-1}(x_1,y_1)  \otimes  \sigma^{-1}(x_2,y_2)  \, .
\end{equation*}
This implies that $\BB_H$ is a sub-bialgebra of~$S(t_H)_{\Theta}$.

It remains to show that $\BB_H$ is stable under the antipode~$S$ of~$S(t_H)_{\Theta}$.
By~\eqref{ant-comm-coalg} and~\eqref{triv-generic} we have
\begin{eqnarray*}
S \bigl( \sigma(x,y) \bigr) 
& = & S(t^{-1}_{x_2y_2}) \, S(t_{x_1}) \, S(t_{y_1})
= t_{x_2y_2} \, t^{-1}_{x_1} \, t^{-1}_{y_1} \\
& = & t_{x_1y_1} \, t^{-1}_{x_2} \, t^{-1}_{y_2}
= \sigma^{-1}(x,y) \, .
\end{eqnarray*}
Since $S(t_H)_{\Theta}$ is cocommutative, the antipode is involutive and,
as a consequence of the previous computation, we obtain
\begin{equation*}
S \bigl( \sigma^{-1}(x,y) \bigr) = \sigma(x,y) \, .
\end{equation*}
This proves that $\BB_H$ is a Hopf subalgebra of~$S(t_H)_{\Theta}$.
\end{proof}

In general, a commutative Hopf algebra $A$ is flat over any left (or right)
coideal subalgebra $B$ (see \cite[Th.~3.4]{MW}).
In view of Proposition~\ref{coideal}, we can apply this to $A = S(t_H)_{\Theta}$
and $B = \BB_H$, to obtain the following.

\begin{theorem}\label{flat}
For any Hopf algebra~$H$,
the $\BB_H$-module~$S(t_H)_{\Theta}$ is flat.
\end{theorem}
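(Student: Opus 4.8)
The statement to be proved is Theorem~\ref{flat}: for any Hopf algebra~$H$, the $\BB_H$-module $S(t_H)_{\Theta}$ is flat. The plan is to invoke the general structure theorem for commutative Hopf algebras over coideal subalgebras, quoted in the paragraph immediately preceding the statement as \cite[Th.~3.4]{MW}: a commutative Hopf algebra~$A$ is faithfully flat (in particular flat) as a module over any left coideal subalgebra~$B$. So the only thing that needs to be said is that the hypotheses of that theorem are met in the present situation, with $A = S(t_H)_{\Theta}$ and $B = \BB_H$.

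Concretely, the steps are: first, recall that $S(t_H)_{\Theta}$ is a commutative Hopf algebra (established in Section~\ref{Takeuchi}, where its coproduct, counit and antipode are given by~\eqref{comm-coalg}, \eqref{comm-Hopf} and~\eqref{ant-comm-coalg}). Second, recall from Proposition~\ref{coideal} that $\BB_H$ is a left coideal subalgebra of~$S(t_H)_{\Theta}$, i.e. $\Delta(\BB_H) \subseteq S(t_H)_{\Theta} \otimes \BB_H$; that proposition already did the work, deducing the coideal property from the explicit coproduct formulas of Lemma~\ref{coprod-generic} on the generators $\sigma(x,y)$ and $\sigma^{-1}(x,y)$ of~$\BB_H$. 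Third, apply \cite[Th.~3.4]{MW} directly to conclude that $S(t_H)_{\Theta}$ is flat (even faithfully flat) over~$\BB_H$. There is essentially nothing further to verify.

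There is no real obstacle here: the proof is a one-line citation once Proposition~\ref{coideal} is in place. The only subtlety worth a remark is that \cite[Th.~3.4]{MW} is stated for left coideal subalgebras and our convention for which tensor factor carries the subalgebra must match theirs (equivalently, one could transpose to a right coideal subalgebra via the opposite comultiplication); this is bookkeeping, not mathematics. Since all the substantive content — the description of $\BB_H$ by its generators, the coproduct computation, and the coideal property — has already been carried out in the lemmas and proposition above, the proof of Theorem~\ref{flat} is immediate.

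\begin{proof}
This is an immediate consequence of Proposition~\ref{coideal} and \cite[Th.~3.4]{MW}, as explained in the paragraph preceding the statement.
\end{proof}
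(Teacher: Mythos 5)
Your proof is correct and is exactly the paper's own argument: Proposition~\ref{coideal} gives that $\BB_H$ is a left coideal subalgebra of the commutative Hopf algebra $S(t_H)_{\Theta}$, and \cite[Th.~3.4]{MW} then yields flatness. One caution: your parenthetical claim that the cited theorem gives \emph{faithful} flatness over any coideal subalgebra is an overstatement --- that result only gives flatness, and faithful flatness is precisely what fails in general and requires the extra hypotheses of the subsequent theorems (Theorems~\ref{thm-finite}--\ref{thm-com}); drop that aside and the proof stands as written.
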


\subsection{Faithful flatness}

We give sufficient conditions for $S(t_H)_{\Theta}$ to be faithfully flat over $\BB_H$.
This faithful flatness is equivalent to the condition that $S(t_H)_\Theta$
is a projective generator as a $\BB_H$-module, since in general,
a commutative Hopf algebra is faithfully flat over a left (or right)
coideal subalgebra if and only if it is a projective generator
(see \cite[Cor.~3.5]{MW} and also \cite[Th.~2.1]{MW}).

Firstly, we show that the algebras have stronger properties 
in the finite-dimensional case.

\begin{theorem}\label{thm-finite}
Let $H$ be a finite-dimensional Hopf algebra. Then
$\BB_H$ is Noetherian and
$S(t_H)_{\Theta}$ is a finitely generated projective $\BB_H$-module.
Moreover, it is a generator as a $\BB_H$-module.
\end{theorem}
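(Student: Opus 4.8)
The plan is to exploit Proposition~\ref{quotient} together with the flatness already established in Theorem~\ref{flat}, plus the finiteness of $H$, to bootstrap from ordinary flatness to the stronger finite-projective-generator statement. First I would observe that since $H$ is finite-dimensional, so is $H_{\ab}$, whence the quotient Hopf algebra $S(t_H)_{\Theta}/(\BB_H^+) \cong H_{\ab}$ is a finite-dimensional commutative Hopf algebra; in particular it is finitely generated as a $k$-algebra, and (being a finite-dimensional Hopf algebra over a field) it is a \emph{free} $k$-module of finite rank. Next I would note that $S(t_H)_{\Theta}$ is generated as a $\BB_H$-module by the elements $t_x$, $x\in H$, by Lemma~\ref{generators}; since the images $t'_x = \bar x$ in $H_{\ab}$ span the finite-dimensional space $H_{\ab}$, we may pick finitely many $x^{(1)},\dots,x^{(n)}\in H$ whose images $t'_{x^{(j)}}$ form a $k$-basis of $H_{\ab}$. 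A Nakayama-type argument along the coideal subalgebra inclusion $\BB_H \hookrightarrow S(t_H)_{\Theta}$ — using that $S(t_H)_{\Theta}$ is flat over $\BB_H$ (Theorem~\ref{flat}) and that $\BB_H$ is a left coideal subalgebra (Proposition~\ref{coideal}) — should then show that the $t_{x^{(j)}}$ already generate $S(t_H)_{\Theta}$ as a $\BB_H$-module, so it is finitely generated.

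Having reduced to finite generation, I would invoke the general theory of commutative Hopf algebras over coideal subalgebras: by \cite[Th.~3.4]{MW} the Hopf algebra $A = S(t_H)_{\Theta}$ is (faithfully) flat over the coideal subalgebra $B = \BB_H$ once one knows it is \emph{faithfully} flat, and the criterion of \cite[Cor.~3.5]{MW} / \cite[Th.~2.1]{MW} equates faithful flatness with being a projective generator. A finitely generated flat module over a commutative ring (here $\BB_H$, which we will have shown is Noetherian) is automatically projective; so the task splits into (a) showing $\BB_H$ is Noetherian, and (b) upgrading flatness to \emph{faithful} flatness. For (a): $\BB_H$ is a left coideal subalgebra of $S(t_H)_{\Theta}$, and $S(t_H)_{\Theta}$ is faithfully flat over $\BB_H$ once (b) is in hand, so descent of the Noetherian property (a f.g. faithfully flat extension of a commutative ring is Noetherian iff the base is — more precisely, if $\BB_H \hookrightarrow S(t_H)_{\Theta}$ is faithfully flat and $S(t_H)_\Theta$ is Noetherian, then so is $\BB_H$; and $S(t_H)_\Theta$, being a localization of a finitely generated polynomial algebra when $H$ is finite-dimensional, is Noetherian). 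This is slightly circular as stated, so in the writeup I would instead order it as: first prove $S(t_H)_{\Theta}$ is a finitely generated $\BB_H$-module (Nakayama as above), deduce $\BB_H$ is Noetherian directly (a subalgebra $B$ of a commutative Noetherian ring $A$ over which $A$ is module-finite is Noetherian — Eakin–Nagata), then flatness (Theorem~\ref{flat}) $+$ finite generation $+$ Noetherian gives projective, and finally faithfulness comes for free because a projective module that generates — which we get from the quotient $H_{\ab}$ being nonzero and the module being ``everywhere nonzero'' — is a generator; alternatively cite the Hopf-theoretic equivalence directly.

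The step I expect to be the main obstacle is the faithfulness/generator part: plain flatness is already delivered by Theorem~\ref{flat}, and finite generation over a Noetherian ring then hands us projectivity essentially for free, but showing that $S(t_H)_{\Theta}$ is a \emph{generator} (equivalently, that $\BB_H \hookrightarrow S(t_H)_{\Theta}$ is \emph{faithfully} flat, equivalently that no nonzero prime of $\BB_H$ fails to lift) is the genuinely Hopf-algebraic input. The cleanest route is to verify the hypotheses of \cite[Cor.~3.5]{MW}: one needs that the quotient coalgebra $S(t_H)_{\Theta}/\BB_H^+ S(t_H)_{\Theta}$ — which Proposition~\ref{quotient} identifies with $H_{\ab}$ — behaves well, and for a commutative Hopf algebra that is finitely generated as a module over its coideal subalgebra, the projective-generator conclusion of that theory is exactly what applies. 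So in the final writeup the logical skeleton is: finite-dimensionality of $H$ $\Rightarrow$ finite-dimensionality of $H_{\ab}$ $\Rightarrow$ (via Lemma~\ref{generators}, Proposition~\ref{quotient}, Theorem~\ref{flat} and Nakayama) $S(t_H)_\Theta$ is a finitely generated $\BB_H$-module $\Rightarrow$ (Eakin–Nagata) $\BB_H$ Noetherian $\Rightarrow$ (flat $+$ f.g. over Noetherian) projective $\Rightarrow$ (Hopf-theoretic equivalence, \cite[Cor.~3.5]{MW}) projective generator.
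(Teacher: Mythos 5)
Your overall skeleton --- finite generation of $S(t_H)_{\Theta}$ as a $\BB_H$-module, Noetherianity of $\BB_H$, then ``flat $+$ finitely presented over a Noetherian ring $\Rightarrow$ projective'', and finally the generator property --- is exactly the paper's. But the step you propose for finite generation would fail. You want to lift a $k$-basis of $H_{\ab}\cong S(t_H)_{\Theta}/\BB_H^+S(t_H)_{\Theta}$ and conclude ``by a Nakayama-type argument'' that the $n=\dim_k H_{\ab}$ lifted elements generate. Nakayama over a non-local ring applies only when the relevant ideal lies in the Jacobson radical or when the module is already known to be finitely generated: here $\BB_H^+=\Ker(\eps|_{\BB_H})$ is a single maximal ideal of $\BB_H$, certainly not contained in the Jacobson radical, and finite generation of the module is precisely what is being proved. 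Generation of the fibre at the one closed point $\BB_H^+$ gives no control at other maximal ideals, let alone globally; and there is no reason a priori that $\dim_k H_{\ab}$ elements should suffice (for noncommutative $H$ one has $\dim_k H_{\ab}<\dim_k H$, and the rank of the projective module away from the augmentation point need not equal $\dim_k H_{\ab}$). The needed fact is, however, a one-liner that you passed over: the map $x\mapsto t_x$ is \emph{linear}, so the generating set $\{t_x\mid x\in H\}$ of Lemma~\ref{generators} spans a finite-dimensional space, and the $t_{x_i}$ for $(x_i)_i$ a basis of $H$ already generate $S(t_H)_{\Theta}$ over $\BB_H$. This is what the paper does; neither Proposition~\ref{quotient} nor $H_{\ab}$ enters the argument.

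Your route to Noetherianity is also heavier than necessary and, as you note yourself, threatens circularity. The paper simply observes that $\BB_H$ is by definition generated as a $k$-algebra by the finitely many elements $\sigma^{\pm1}(x_i,x_j)$, hence is Noetherian by the Hilbert basis theorem --- no Eakin--Nagata, no prior module-finiteness needed. (Your Eakin--Nagata argument does become valid once finite generation is repaired, since $S(t_H)_{\Theta}$ is a finitely generated $k$-algebra and hence Noetherian; it is just not the economical order of deductions.) The remaining steps are sound and agree with the paper: flat (Theorem~\ref{flat}) plus finitely presented gives projective, and the generator property follows because a finitely generated projective module over a commutative ring is a generator as soon as all its localizations are nonzero, which holds here since $\BB_H\hookrightarrow S(t_H)_{\Theta}$ is an injection of a flat module; equivalently one may cite the equivalence of ``faithfully flat'' and ``projective generator'' from \cite{MW} as you suggest.
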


It follows that $S(t_H)_{\Theta}$ is integral over~$\BB_H$. 
This answers Question~6.1 in~\cite{Ka1} when~$H$ is finite-dimensional.
Consequently, the results of~\cite[Sect.~7]{AK} hold
unconditionally for any finite-dimensional Hopf algebra.

\begin{proof}
Since the algebra~$\BB_H$ is generated 
by the finitely many elements $\sigma^{\pm 1}(x_i,x_j)$,
where $(x_i)_i$ is an arbitrarily chosen linear basis of~$H$, it is Noetherian.

Next, it follows from Lemma~\ref{generators} that 
$S(t_H)_{\Theta}$ is generated as a $\BB_H$-module
by the elements~$t_{x_i}$, where $x_i$ runs through the chosen finite basis. 
Therefore, $S(t_H)_{\Theta}$ is finitely generated as a $\BB_H$-module.
Since $\BB_H$ is Noetherian, 
it follows that the $\BB_H$-module $S(t_H)_{\Theta}$ is finitely presented.

We know from Theorem~\ref{flat} that $S(t_H)_{\Theta}$ is a flat $\BB_H$-module. 
This together with the finite presentation assertion implies that the module is finitely generated projective.
The last assertion in the theorem is a consequence of the previous one.
\end{proof}

\begin{corollary}
If $H$ is a finite-dimensional Hopf algebra, 
then the Krull dimension of~$\BB_H$ is equal to~$\dim_k H$.
\end{corollary}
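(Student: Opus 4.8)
The plan is to compute $\dim \BB_H$ by transporting the problem to $S(t_H)_{\Theta}$ via the module-finite extension $\BB_H \hookrightarrow S(t_H)_{\Theta}$ supplied by Theorem~\ref{thm-finite}. Since $S(t_H)_{\Theta}$ is a finitely generated $\BB_H$-module, it is integral over~$\BB_H$; and an integral extension of commutative rings preserves Krull dimension. Hence $\dim \BB_H = \dim S(t_H)_{\Theta}$, and it remains to show that the right-hand side equals $\dim_k H$.

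To identify $\dim S(t_H)_{\Theta}$ I would use that $S(t_H)_{\Theta}$ is a finitely generated $k$-algebra that is an integral domain, so that its Krull dimension equals the transcendence degree over~$k$ of its field of fractions. Finite generation is exactly where finite-dimensionality of~$H$ enters: if $(x_i)_{1 \le i \le n}$ is a $k$-basis of~$H$ with $n = \dim_k H$, then $S(t_H)_{\Theta}$ is generated as an algebra by the finitely many elements $t_{x_i}$ and~$t^{-1}_{x_i}$, because the map $x \mapsto t^{-1}_x$ is $k$-linear and hence every $t^{-1}_x$ is a $k$-linear combination of the $t^{-1}_{x_i}$. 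As a subalgebra of the domain $\Frac S(t_H)$, the algebra $S(t_H)_{\Theta}$ is itself a domain; and from the chain $S(t_H) \subseteq S(t_H)_{\Theta} \subseteq \Frac S(t_H)$ we get, on passing to fields of fractions, $\Frac S(t_H)_{\Theta} = \Frac S(t_H) = k(t_{x_1}, \dots, t_{x_n})$, which has transcendence degree $n$ over~$k$ since $S(t_H)$ is the polynomial algebra on the $n$ indeterminates~$t_{x_i}$. Therefore $\dim S(t_H)_{\Theta} = n = \dim_k H$, and combining with the first paragraph gives the corollary.

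I do not expect a genuine obstacle: this is a routine application of dimension theory of finitely generated algebras over a field together with invariance of Krull dimension under integral extensions, once Theorem~\ref{thm-finite} is available. The only point deserving care is that everything hinges on $H$ being finite-dimensional --- that is what makes $\BB_H$ and $S(t_H)_{\Theta}$ finitely generated over~$k$ (and $\BB_H$ Noetherian), and for an infinite-dimensional~$H$ the algebra $\BB_H$ need not have finite Krull dimension. One could also bypass the ``integral implies equidimensional'' step and compute $\dim \BB_H = \operatorname{trdeg}_k \Frac \BB_H$ directly, using that $\BB_H$ is itself a finitely generated domain over~$k$ (generated by the $\sigma^{\pm 1}(x_i,x_j)$) and that $\Frac S(t_H)_{\Theta}$ is algebraic over $\Frac \BB_H$ because $S(t_H)_{\Theta}$ is integral over~$\BB_H$, so that $\operatorname{trdeg}_k \Frac \BB_H = \operatorname{trdeg}_k \Frac S(t_H)_{\Theta} = \dim_k H$.
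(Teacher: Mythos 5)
Your argument is correct and follows essentially the same route as the paper: the paper likewise deduces integrality of $S(t_H)_{\Theta}$ over $\BB_H$ from Theorem~\ref{thm-finite}, invokes invariance of Krull dimension under integral extensions (citing Matsumura), and takes $\dim S(t_H)_{\Theta} = \dim_k H$ as clear. You merely spell out that last step via transcendence degree, which is a reasonable way to justify it.
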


In other words, the affine algebraic variety $\Spec (\BB_H)$ has dimension~$\dim_k H$.

\begin{proof}
The Krull dimension of~$S(t_H)_{\Theta}$ is clearly equal to~$\dim_k H$.
Since $S(t_H)_{\Theta}$ is integral over~$\BB_H$, 
it follows from~\cite[Th.~20 in Sect.~(13.C)]{Mat} that $\BB_H$ and $S(t_H)_{\Theta}$ 
have the same Krull dimension. 
\end{proof}

Secondly, we have the following.

\begin{theorem}\label{thm-cocom}
If $H$ is a cocommutative Hopf algebra, 
then $S(t_H)_\Theta$ is faithfully flat over $\BB_H$.
\end{theorem}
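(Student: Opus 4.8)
The plan is to exploit the extra structure available in the cocommutative case, namely that by Proposition~\ref{coideal} the subalgebra $\BB_H$ is not merely a left coideal subalgebra but a Hopf subalgebra of $S(t_H)_{\Theta}$. For a commutative Hopf algebra, faithful flatness over a Hopf subalgebra is a classical fact (this is part of the Masuoka--Wang circle of results, or one may appeal directly to the theory of quotients of affine group schemes), so once the Hopf-subalgebra property is in hand the theorem follows essentially formally. Thus the first and main step is simply to invoke Proposition~\ref{coideal}, which already does the substantive work.

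Concretely, I would proceed as follows. First I would record that since $H$ is cocommutative, so is $S(t_H)_{\Theta}$, and by Proposition~\ref{coideal} the algebra $\BB_H$ is a Hopf subalgebra. Next I would recall the relevant faithful flatness statement: a commutative Hopf algebra is faithfully flat as a module over any Hopf subalgebra. In the notation of the paper this can be cited from \cite{MW} (or, geometrically, from \cite{Wa}): combining \cite[Th.~2.1]{MW} with the projective-generator criterion already quoted in the text after Proposition~\ref{coideal}, a Hopf subalgebra inclusion of commutative Hopf algebras is automatically a projective generator, hence faithfully flat. Applying this to $A = S(t_H)_{\Theta}$ and $B = \BB_H$ gives the conclusion.

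Alternatively, and perhaps more in keeping with the self-contained style of the earlier sections, I would give a direct argument: form the quotient Hopf algebra $\bar H := S(t_H)_{\Theta}/(\BB_H^+)$, which by Proposition~\ref{quotient} is $H_{\ab}$, and observe that $S(t_H)_{\Theta}$ is a right $\bar H$-comodule algebra via the canonical projection. One then checks that $\BB_H$ is exactly the subalgebra of coinvariants $\{a \in S(t_H)_{\Theta} : a_{(1)} \otimes \overline{a_{(2)}} = a \otimes 1\}$ — this uses the coproduct formulas of Lemma~\ref{coprod-generic} together with cocommutativity, which make $\Delta(\sigma(x,y)) = \sigma(x_1,y_1)\otimes\sigma(x_2,y_2)$, so that the values of $\sigma$ and $\sigma^{-1}$ are coinvariant — and then faithful flatness of a commutative Hopf algebra over its subalgebra of coinvariants for a quotient Hopf algebra is the standard descent theorem. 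The inclusion $\BB_H \subseteq (S(t_H)_{\Theta})^{\co \bar H}$ is immediate from the coproduct formulas; the reverse inclusion should follow because $S(t_H)_{\Theta}$ is generated as a $\BB_H$-module by the $t_x$ (Lemma~\ref{generators}) and $\overline{t_x}$ already generates $\bar H = H_{\ab}$ freely in the appropriate sense.

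The main obstacle I anticipate is not the faithful flatness input, which is off-the-shelf, but pinning down precisely which general theorem to cite and verifying its hypotheses: the cleanest statements (Takeuchi, Masuoka--Wang) require commutativity of the big Hopf algebra (satisfied here) and sometimes a finiteness or Noetherian hypothesis on the quotient or the subalgebra, which is \emph{not} available in general since $H$ need not be finite-dimensional. I would therefore take care to cite a version — such as the faithful flatness of a commutative Hopf algebra over a Hopf subalgebra with no finiteness assumption, which does hold in the commutative setting — rather than a group-scheme statement that silently assumes algebraicity. With the right reference this reduces to a one-line deduction from Proposition~\ref{coideal}.
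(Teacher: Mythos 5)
Your main argument is correct and follows essentially the same route as the paper: Proposition~\ref{coideal} gives that $\BB_H$ is a Hopf subalgebra of $S(t_H)_{\Theta}$, and one then invokes a classical theorem of Takeuchi on faithful flatness over Hopf subalgebras --- the paper cites the cocommutative version (a cocommutative Hopf algebra is a projective generator over any Hopf subalgebra, via \cite[Remark~3.6]{MW}) where you cite the commutative version, but both apply here and require no finiteness hypotheses. One caveat: your ``alternative'' self-contained argument is circular as sketched, because the identification $\BB_H = S(t_H)_{\Theta}^{\co-Q}$ is obtained in the paper (remark following Theorem~\ref{thm-com}, via \cite[Th.~3]{Ta2}) as a \emph{consequence} of faithful flatness, and your sketch of the reverse inclusion does not supply an independent proof of it.
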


\begin{proof}
Takeuchi proved that a cocommutative Hopf algebra~$A$ is a projective generator
over any Hopf subalgebra~$B$ (see~\cite[Remark~3.6]{MW}).
In view of Proposition~\ref{coideal}, we may apply this 
to $A = S(t_H)_{\Theta}$ and~$B = \BB_H$, to obtain the desired result.
\end{proof}

\subsection{Freeness}

We now give sufficient conditions for $S(t_H)_{\Theta}$ to be free over~$\BB_H$.
Let us start with the following.

\begin{theorem}\label{thm-pointed}
If $H$ is a pointed Hopf algebra such that 
each element of the kernel of the natural epimorphism
\[
G(H)_{\ab} \to G(H_{\ab})
\]
is of finite order,
then the algebra $S(t_H)_{\Theta}$ is a free $\BB_H$-module.
\end{theorem}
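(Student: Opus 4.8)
The plan is to exploit the structure theory of pointed commutative Hopf algebras together with the identification of the quotient $S(t_H)_{\Theta}/(\BB_H^+)$ obtained in Proposition~\ref{quotient}. First I would observe that since $H$ is pointed, the commutative Hopf algebra $S(t_H)_{\Theta}$ is pointed as well: it is generated as an algebra by the $t_x^{\pm1}$, and its grouplike elements are precisely the $t_g^{\pm1}$ for $g\in G(H)$, so that $G(S(t_H)_{\Theta}) \cong \ZZ^{(G(H))}$, the free abelian group on $G(H)$ modulo the relations $t_{gh}=t_g t_h$ coming from grouplikeness of products --- in fact $G(S(t_H)_{\Theta})$ is the universal abelian group on the monoid $G(H)$, which is $G(H)_{\ab}$ after group-completion, hence $\ZZ$-free exactly when $G(H)$ is already a group, and in general is $G(H)_{\ab}$ itself since $G(H)$ is a group. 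Likewise, the coradical of $S(t_H)_{\Theta}$ is the group algebra $k\,G(S(t_H)_{\Theta}) = k\,G(H)_{\ab}$.

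The key structural input is the theorem (due to Radford, see also the general theory of pointed commutative Hopf algebras) that a pointed commutative Hopf algebra $A$ over a field is, as an algebra, free over its coradical subHopfalgebra $kG(A)$; more precisely, and this is what I actually need, if $B \subseteq A$ is a left coideal subalgebra of a commutative pointed Hopf algebra such that $A$ is faithfully flat over $B$, then $A$ is \emph{free} as a $B$-module provided a suitable condition on grouplikes holds. The strategy is: apply Proposition~\ref{coideal} to get that $\BB_H$ is a left coideal subalgebra; apply Theorem~\ref{flat} to get flatness; then upgrade flatness to freeness. For the upgrade I would use the graded/filtered argument: the coradical filtration of $S(t_H)_{\Theta}$ induces a filtration on $\BB_H$, and on associated graded objects the question reduces to the grouplike level, namely to whether $k\,G(H)_{\ab} = k\,G(S(t_H)_{\Theta})$ is free over $\BB_H \cap k\,G(H)_{\ab}$. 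This last intersection is the group algebra of the image of $G(H)_{\ab}$ in $G(H_{\ab})$ --- by Proposition~\ref{quotient}, $\BB_H^+$ is exactly the kernel of $S(t_H)_{\Theta}\to H_{\ab}$, so $\BB_H\cap k\,G(H)_{\ab}$ corresponds to the subgroup $\ker(G(H)_{\ab}\to G(H_{\ab}))$, and $k\,G(H)_{\ab}$ is free as a module over $k\,\ker(G(H)_{\ab}\to G(H_{\ab}))$ precisely when every element of that kernel has finite order: a group algebra $kG$ is free over $kN$ for $N\le G$ if and only if one can choose coset representatives, which is automatic, but the \emph{freeness as a Hopf-subalgebra-module in the filtered sense} forces the torsion hypothesis because only then does the extension $k\,\ker \hookrightarrow k\,G(H)_{\ab}$ split compatibly. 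This is exactly where the finiteness condition in the hypothesis is consumed.

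Concretely the steps, in order, are: (1) identify $G(S(t_H)_{\Theta})$ with $G(H)_{\ab}$ and the coradical with $k\,G(H)_{\ab}$, and note $S(t_H)_{\Theta}$ is pointed; (2) record that $\BB_H$ is a left coideal subalgebra (Proposition~\ref{coideal}) over which $S(t_H)_{\Theta}$ is faithfully flat --- here I must first check faithful flatness, not just flatness, which should follow from Proposition~\ref{quotient} since the quotient $S(t_H)_{\Theta}/(\BB_H^+)\cong H_{\ab}$ is nonzero and the relevant Hopf-algebraic faithful flatness criterion of \cite{MW} applies once one knows $\BB_H$ is ``conormal enough''; (3) invoke the structure theorem that a faithfully flat extension $B\hookrightarrow A$ of commutative Hopf algebras with $A$ pointed is free provided $G(A)$ is free as a module over $B\cap kG(A)$, i.e.\ over the group algebra of $\ker(G(A)\to G(A/(B^+)))$; (4) verify that this freeness of group algebras holds exactly under the stated torsion condition on $\ker(G(H)_{\ab}\to G(H_{\ab}))$, using that a torsion-free-modulo-$N$ quotient would obstruct choosing a $kN$-basis compatible with the coalgebra filtration. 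I expect step~(3) --- pinning down and correctly citing or proving the precise "faithfully flat + pointed $\Rightarrow$ free" statement, and making the reduction to the grouplike level via the coradical filtration rigorous --- to be the main obstacle; the torsion bookkeeping in step~(4) and the faithful flatness in step~(2) are comparatively routine.
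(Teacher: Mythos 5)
There is a genuine gap, and it starts with your very first identification. The grouplike elements of $S(t_H)_{\Theta}$ do \emph{not} satisfy $t_{gh}=t_g t_h$: since the $t_x$ are independent indeterminates, $G(S(t_H)_{\Theta})$ is the \emph{free} abelian group on the set $G(H)$, not $G(H)_{\ab}$. The elements $t_g t_h t_{gh}^{-1}=\sigma(g,h)$ are nontrivial grouplikes, and they are precisely the grouplike generators of $\BB_H$; collapsing them to $1$ erases the object you are supposed to be studying. Relatedly, your step (2) begs the question: for a pointed Hopf algebra, faithful flatness of $S(t_H)_{\Theta}$ over a left coideal subalgebra is \emph{equivalent} to freeness (this is Masuoka's criterion, \cite[Th.~1.3]{Ma1}, quoted as Lemma~\ref{lem2} in the paper), so you cannot ``first check faithful flatness'' from Proposition~\ref{quotient} and \cite{MW} and then upgrade it --- \cite{MW} only yields flatness (Theorem~\ref{flat}), and the passage to faithful flatness is exactly the content of the theorem, the place where pointedness and the torsion hypothesis must be used. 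Finally, your step (4) locates the torsion hypothesis in the freeness of $kG$ over $kN$, which, as you yourself note, holds for \emph{every} subgroup $N$; an argument in which the hypothesis is consumed at a step that is unconditionally true cannot be where the hypothesis is actually needed.

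The correct mechanism is different. Masuoka's criterion reduces the theorem to showing that the \emph{monoid} $G(\BB_H)=\BB_H\cap G(S(t_H)_{\Theta})$ is closed under inverses. One first computes (inside the free abelian group on $G(H)$) that the grouplikes of the coinvariant algebra $C=S(t_H)_{\Theta}^{\co-Q}$, $Q=S(t_H)_{\Theta}/(\BB_H^+)\cong H_{\ab}$, are generated by the $t_gt_ht_{gh}^{-1}$ together with the $t_f$ for $f\in\Ker(G(H)\to G(H_{\ab}))$. Any $\gamma\in G(\BB_H)\subset G(C)$ is then $\beta t_f$ with $\beta$ in the subgroup $G_0$ generated by the $\sigma(g,h)$ (which lies in $\BB_H$ and is closed under inverses, since $\sigma^{-1}(g,h)\in\BB_H$), so the issue is whether $t_f^{-1}\in\BB_H$. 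This is where the hypothesis enters: if the image of $f$ in $\Ker(G(H)_{\ab}\to G(H_{\ab}))$ has order $N<\infty$, then $t_f^{N}\equiv 1$ modulo $G_0$, hence $t_f^{-1}\equiv t_f^{N-1}$ modulo $G_0$ and therefore lies in $\BB_H$. Your coradical-filtration outline, besides resting on the wrong grouplike group, never isolates this monoid-versus-group distinction, which is the actual obstruction the finiteness hypothesis removes.
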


Here $G(H)_{\ab}$ denotes the abelianization of the group~$G(H)$.
The finiteness condition in the theorem
is satisfied for instance if all elements of~$G(H)_{\ab}$
are of finite order, or if $G(H)_{\ab}$ is trivial.
Theorem~\ref{thm-pointed} generalizes~\cite[Th.~6.2]{Ka1}.

In order to prove the theorem, we need the following preliminaries. 
If $H$ is pointed, then so is $S(t_H)_{\Theta}$, and its grouplike elements
form the free abelian group on the grouplike elements of~$H$:
\begin{equation*}
G(S(t_H)_{\Theta}) = \langle t_g \, |\, g\in G(H) \rangle_{\ab} \, .
\end{equation*}
Let $Q = S(t_H)_{\Theta}/(\BB_H^+)$ 
be the quotient Hopf algebra considered in Proposition~\ref{quotient}.
We denote by
\begin{equation*}
C =  S(t_H)_{\Theta}^{\co-Q}
\end{equation*}
the left coideal subalgebra of~$S(t_H)_{\Theta}$ consisting of all $Q$-coinvariants. 
Then $C$ contains~$\BB_H$.
Let
\begin{equation*}
G(C) =  C \cap G(S(t_H)_{\Theta}) \quad\text{and}\quad
G(\BB_H) =  \BB_H \cap G(S(t_H)_{\Theta})
\end{equation*}
be the submonoids of~$G(S(t_H)_{\Theta})$ consisting of the grouplike elements
contained in~$C$ and in~$\BB_H$, respectively. 
By definition of~$C$,
\begin{equation}\label{GC}
G(C) =  \Ker \bigl( G(S(t_H)_{\Theta}) \to G(Q) \bigr) \, ,
\end{equation}
which is the kernel of the group homomorphism (indeed, epimorphism) induced by the projection 
$S(t_H)_{\Theta} \to Q \cong H_{\ab}$. In particular, $G(C)$ is a group.

\begin{lemma}\label{lem1}
The group $G(C)$ is the subgroup of $G(S(t_H)_{\Theta})$ generated by all elements of the form
$t_gt_h t^{-1}_{gh}$, where $g,h \in G(H)$, and of the form~$t_f$,
where $f$ belongs to the kernel of $G(H) \to G(H_{\ab})$.
\end{lemma}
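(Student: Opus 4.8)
The plan is to compute $G(C)$ explicitly using the description~\eqref{GC} as the kernel of the group homomorphism $\pi_* : G(S(t_H)_{\Theta}) \to G(Q)$ induced by the projection $\pi : S(t_H)_{\Theta} \to Q \cong H_{\ab}$. Since $H$ is pointed, $G(S(t_H)_{\Theta})$ is the free abelian group on the symbols $t_g$ for $g \in G(H)$, and by Proposition~\ref{quotient} the map $\pi$ sends $t_x \mapsto \bar x$ (the image of $x$ in $H_{\ab}$); in particular $\pi_*(t_g) = \bar g$, the image of $g \in G(H)$ under the composite $G(H) \to G(H) _{\ab} \to G(H_{\ab})$. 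So $G(C)$ is the kernel of the homomorphism from the free abelian group on $\{t_g : g \in G(H)\}$ to $G(H_{\ab})$ that sends $t_g \mapsto \bar g$.

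First I would factor this homomorphism. The free abelian group $\mathbb{Z}^{(G(H))}$ surjects onto $G(H)_{\ab} = \mathbb{Z}^{(G(H))}/R$, where $R$ is the subgroup generated by the relators $t_g t_h t_{gh}^{-1}$ (this is just the standard presentation of an abelian group by generators and relations, applied to $G(H)_{\ab}$; note $t_1$ is then identified with the identity, consistent with $t_1 t_1 t_1^{-1} \in R$). Then $G(H)_{\ab} \to G(H_{\ab})$ is the natural epimorphism appearing in Theorem~\ref{thm-pointed}, whose kernel I will call $N$. By the correspondence for kernels of composed surjections, $G(C) = \pi_*^{-1}(0)$ is the preimage in $\mathbb{Z}^{(G(H))}$ of $N \subseteq G(H)_{\ab}$, which is generated by $R$ together with any set of lifts of generators of $N$. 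The subgroup $R$ is exactly generated by the elements $t_g t_h t_{gh}^{-1}$ claimed in the lemma, so it remains to identify a convenient generating set for $N$ and to check that the elements $t_f$ with $f \in \ker(G(H) \to G(H_{\ab}))$ provide such lifts.

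The main point to verify is that the kernel $N$ of $G(H)_{\ab} \to G(H_{\ab})$ is generated by the images of the elements $f \in G(H)$ lying in the kernel of $G(H) \to G(H_{\ab})$. One inclusion is clear: each such $f$ maps to $1$ in $G(H_{\ab})$, hence its class in $G(H)_{\ab}$ lies in $N$. For the reverse inclusion I would use that $G(H) \to G(H_{\ab})$ is surjective (because $H \to H_{\ab}$ is a surjection of Hopf algebras, hence carries grouplikes onto grouplikes), so $G(H_{\ab}) = G(H)/K$ where $K = \ker(G(H) \to G(H_{\ab}))$, and therefore $G(H_{\ab})$ is a quotient of $G(H)_{\ab}$ by the image $\bar K$ of $K$; thus $N = \bar K$, i.e.\ $N$ is generated precisely by the classes of elements of $K$. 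Combining: $G(C)$ is generated by $R \cup \{t_f : f \in K\}$, which is the asserted statement. I do not expect a serious obstacle here — the argument is essentially a diagram chase among abelian groups — but care is needed to keep straight the three groups $G(H)$, $G(H)_{\ab}$, $G(H_{\ab})$ and the fact that $t_g \mapsto \bar g$ under $\pi_*$ refers to the image in $G(H_{\ab})$, not merely in $G(H)_{\ab}$.
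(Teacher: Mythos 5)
Your argument is correct and follows essentially the same route as the paper's: both identify $G(S(t_H)_{\Theta})/G_0 \cong G(H)_{\ab}$ via the relators $t_g t_h t^{-1}_{gh}$, then pass to $G(H_{\ab})$ by adjoining the elements $t_f$, and conclude via~\eqref{GC}. One small caution: the surjectivity of $G(H) \to G(H_{\ab})$ is \emph{not} a consequence of $H \to H_{\ab}$ being a Hopf algebra surjection in general (grouplikes of a quotient need not lift); it holds here because $H$ is pointed, so the coradical of the quotient $H_{\ab}$ is contained in the image of $kG(H)$.
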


\begin{proof}
Let $G_0$ be the subgroup of~$G(S(t_H)_{\Theta})$ generated by the elements
$t_gt_h t^{-1}_{gh}$, where $g,h \in G(H)$.
Since 
\begin{equation}\label{modG0}
t_{gh} \equiv t_g t_h \, \quad t_1 \equiv 1\, , \quad
t_{g^{-1}} \equiv t^{-1}_g \, , \quad 
t_{ghg^{-1}h^{-1}} \equiv 1
\end{equation}
modulo~$G_0$, we have the isomorphism
\begin{equation*}
G(S(t_H)_{\Theta})/G_0 \cong G(H)_{\ab} \, .
\end{equation*}

Now let $G_1$ be the smallest subgroup containing~$G_0$ 
and the elements~$t_f$ described in the lemma.
We have the isomorphism
\begin{equation*}
G(S(t_H)_{\Theta})/G_1  \cong G(H_{\ab}) \, .
\end{equation*}
The lemma follows from this isomorphism and from~~\eqref{GC}.
\end{proof}

The following is a consequence of~\cite[Th.~1.3]{Ma1}.

\begin{lemma}\label{lem2}
If $H$ is a pointed Hopf algebra, then the following are equivalent:

(a) $\BB_H \hookrightarrow S(t_H)_{\Theta}$ is faithfully flat;

(b) $S(t_H)_{\Theta}$ is a free $\BB_H$-module;

(c) $G(\BB_H)$ is closed under inverse;

(d) $G(\BB_H) = G(C)$;

(e) $\BB_H = C$.
\end{lemma}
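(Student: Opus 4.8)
The plan is to derive the chain of equivalences from \cite[Th.~1.3]{Ma1}, which is tailored to exactly this setup: a pointed commutative Hopf algebra equipped with a left coideal subalgebra over which it is flat. The first thing I would check is that this applies with $A = S(t_H)_{\Theta}$ and $B = \BB_H$. Since $H$ is pointed, $S(t_H)_{\Theta}$ is pointed, with $G(S(t_H)_{\Theta})$ the free abelian group on $\{t_g \mid g\in G(H)\}$, as recalled before the lemma; $\BB_H$ is a left coideal subalgebra by Proposition~\ref{coideal}; and $S(t_H)_{\Theta}$ is already a flat $\BB_H$-module by Theorem~\ref{flat}. Under these hypotheses, with $Q = S(t_H)_{\Theta}/(\BB_H^+)$ the quotient Hopf algebra of Proposition~\ref{quotient} and $C = S(t_H)_{\Theta}^{\co-Q}$ its coinvariants, conditions (a)--(e) are precisely the conditions compared in \cite[Th.~1.3]{Ma1}.

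Concretely I would run the cycle (e) $\Rightarrow$ (d) $\Rightarrow$ (c) $\Rightarrow$ (b) $\Rightarrow$ (a) $\Rightarrow$ (e). Three of these are formal: (e) $\Rightarrow$ (d) by intersecting the equality $\BB_H = C$ with $G(S(t_H)_{\Theta})$; (d) $\Rightarrow$ (c) because $G(C)$ is a subgroup of $G(S(t_H)_{\Theta})$ by~\eqref{GC}, so $G(\BB_H) = G(C)$ is closed under inverse; and (b) $\Rightarrow$ (a) because a nonzero free module is faithfully flat. For (a) $\Rightarrow$ (e) I would invoke the standard correspondence for faithfully flat left coideal subalgebras of a commutative Hopf algebra (cf.\ \cite{MW}): faithful flatness of $\BB_H \hookrightarrow S(t_H)_{\Theta}$ forces $\BB_H = S(t_H)_{\Theta}^{\co-Q} = C$.

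The substantive implication is (c) $\Rightarrow$ (b), and this is where \cite[Th.~1.3]{Ma1} carries the weight: for a pointed commutative Hopf algebra that is flat over a left coideal subalgebra, once the monoid of grouplikes lying in the subalgebra is closed under inversion, the ambient Hopf algebra is free over that subalgebra. I expect the real effort here to be bookkeeping rather than computation: matching our objects $Q$, $C$, $G(C)$, $G(\BB_H)$ to the invariants used in \emph{loc.\ cit.}, and checking via Lemma~\ref{lem1} that $G(\BB_H) \subseteq G(C)$, with $G(C)$ generated by the elements $\sigma(g,h) = t_g t_h t_{gh}^{-1}$ (which do lie in $\BB_H$) together with the $t_f$ for $f\in\Ker(G(H)\to G(H_{\ab}))$, so that closure of $G(\BB_H)$ under inverse is indeed the natural hypothesis to feed into Masuoka's theorem. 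That translation step, rather than any new estimate, is the main obstacle.
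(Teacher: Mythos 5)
Your proposal is correct and follows the same route as the paper, which simply records the lemma as a consequence of \cite[Th.~1.3]{Ma1} after the hypotheses (pointedness of $S(t_H)_{\Theta}$, $\BB_H$ a left coideal subalgebra by Proposition~\ref{coideal}, flatness by Theorem~\ref{flat}) have been put in place. Your extra bookkeeping of the easy implications and the identification of (c)~$\Rightarrow$~(b) as the substantive input from Masuoka's theorem is consistent with, and slightly more explicit than, what the paper provides.
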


\begin{proof}[Proof of Theorem~\ref{thm-pointed}]
We will verify Condition\,(c) of Lemma~\ref{lem2}. 
Let $G_0$ be as in the proof of Lemma~\ref{lem1}. 
Pick any element $\gamma \in G(\BB_H)$; we will prove that it has an inverse in~$G(\BB_H)$.
Since $G(\BB_H) \subset G(C)$, it follows from Lemma~\ref{lem1} and from~\eqref{modG0}
that 
\[
\gamma = \beta \, t_f
\]
for some $\beta \in G_0$ and $f\in \Ker(G(H) \to G(H_{\ab}))$.
We must prove $t_f^{-1} \in \BB_H$.
If the image of $f$ in the kernel of $G(H)_{\ab} \to G(H_{\ab})$
has order $N < \infty$, then
$f^N \equiv 1$ modulo $[G(H),G(H)]$.
By~\eqref{modG0}, this implies that $t_f^N \equiv 1$, and so
$t_f^{-1} \equiv t_f^{N-1}$ modulo~$G_0$.
Since $t_f^{N-1}$ belongs to~$\BB_H$, so does~$t_f^{-1}$, as desired.
\end{proof}

\begin{Qu}
Suppose that $H$ is pointed. If $t_g\in \BB_H$ for some $g\in G(H)$, 
does $t^{-1}_g$ belong to~$\BB_H$, or equivalently, 
does $t_{g^{-1}}$ belong to~$\BB_H$?
If this is true, then following the proof of Theorem~\ref{thm-pointed},
we can conclude that $S(t_H)_{\Theta}$ is a free $\BB_H$-module
for any pointed Hopf algebra~$H$.
\end{Qu}

We now give more sufficient conditions for $S(t_H)_{\Theta}$ to be free over~$\BB_H$.

\begin{theorem}\label{thm-com}
Let $H$ be a Hopf algebra.
If ~$H$ is commutative or pointed cocommutative, then
the algebra $S(t_H)_{\Theta}$ is a free $\BB_H$-module.
\end{theorem}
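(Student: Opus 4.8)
The plan is to treat the two cases separately, reducing each to a situation already handled. \emph{First, the pointed cocommutative case.} If $H$ is pointed cocommutative, then by the Cartier--Kostant--Milnor--Moore structure theory $H$ is a smash product $U(P) \# kG$, where $G = G(H)$ and $P$ is the space of primitives; in particular $G(H)$ acts on everything and, crucially, the abelianization map $G(H)_{\ab} \to G(H_{\ab})$ is relevant. The key observation is that, since $H$ is cocommutative, Corollary~\ref{coro-LH} applies, so working with the trivial cocycle loses nothing; moreover, by Proposition~\ref{coideal}, $\BB_H$ is a Hopf subalgebra of $S(t_H)_\Theta$, not merely a coideal subalgebra. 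I would then invoke Lemma~\ref{lem2}: since $H$ is pointed, it suffices to verify that $G(\BB_H)$ is closed under inverses (Condition~(c)). The point is that for $H$ cocommutative the computation $S(\sigma(x,y)) = \sigma^{-1}(x,y)$ carried out in the proof of Proposition~\ref{coideal} shows directly that $\BB_H$ is antipode-stable; applying the antipode $S$ of $S(t_H)_\Theta$ (which sends $t_g \mapsto t_g^{-1} = t_{g^{-1}}$ on grouplikes) to any $\gamma \in G(\BB_H)$ produces $S(\gamma) \in \BB_H$, and since $\gamma$ is grouplike, $S(\gamma) = \gamma^{-1}$. Thus $G(\BB_H)$ is a group, Condition~(c) holds, and freeness follows from Lemma~\ref{lem2}.

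\emph{Second, the commutative case.} When $H$ is commutative, $H$ is a coideal subalgebra situation of a different flavor: the largest commutative quotient $H_{\ab}$ equals $H$ itself, so by Proposition~\ref{quotient} the quotient $S(t_H)_\Theta/(\BB_H^+) \cong H$, and the coalgebra map $t_H \to H$, $t_x \mapsto x$ is now an algebra map because $t_x t_y = t_{xy}$ modulo $(\BB_H^+)$ and $H$ is commutative. I would try to show that the Hopf algebra surjection $\pi : S(t_H)_\Theta \to H$ splits as a coalgebra map, or better, that $S(t_H)_\Theta$ is a crossed product / cleft extension of $\BB_H$ by $H$; freeness of a cleft extension over its base is then automatic. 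Concretely, the canonical map $H \to S(t_H)_\Theta$, $x \mapsto t_x$ is a convolution-invertible $H$-colinear map (its convolution inverse sends $x \mapsto t^{-1}_x = S(t_x)$, which lands in $S(t_H)_\Theta$), and Lemma~\ref{generators} tells us the $t_x$ generate $S(t_H)_\Theta$ as a $\BB_H$-module; together these should identify $S(t_H)_\Theta \cong \BB_H \otimes H$ as a left $\BB_H$-module via $b \otimes x \mapsto b\, t_x$. The freeness then reads off from this isomorphism, the inverse being built from $\pi$ and the formula $\sigma(x,y) t_{xy} = t_x t_y$ used in the proof of Lemma~\ref{generators}.

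\emph{The main obstacle.} In the commutative case the delicate point is verifying that the multiplication map $\BB_H \otimes H \to S(t_H)_\Theta$, $b \otimes x \mapsto b\, t_x$ is \emph{injective} — surjectivity is exactly Lemma~\ref{generators}. The natural way is to produce a one-sided inverse using the coaction: $s \mapsto s_0 \cdot S(\bar{s_1}) \otimes s_1$, where $\bar{\phantom{x}} : S(t_H)_\Theta \to H$ is the projection $\pi$ and one interprets the tensor factor appropriately; one must check this lands in $\BB_H \otimes H$, i.e. that $s_0\, t^{-1}_{\pi(s_1)} \in \BB_H$ for all $s$, which amounts to the coinvariance identity $S(t_H)_\Theta^{\co\text{-}H} = \BB_H$. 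That last equality is the heart of the matter: the inclusion $\BB_H \subseteq S(t_H)_\Theta^{\co\text{-}H}$ follows from Lemma~\ref{coprod-generic} (the values of $\sigma^{\pm1}$ are coinvariant for the $H$-coaction coming from $\pi$), while the reverse inclusion should follow from a faithful-flatness descent argument using Theorem~\ref{flat} applied to $\BB_H \hookrightarrow S(t_H)_\Theta$ together with the already-known specialization $\chi_0$. I expect this coinvariance computation, and the bookkeeping needed to promote flatness to the cleft/crossed-product isomorphism, to be the part requiring the most care; by contrast the pointed cocommutative case is essentially immediate from Proposition~\ref{coideal} and Lemma~\ref{lem2}.
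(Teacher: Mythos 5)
Your treatment of the pointed cocommutative case is correct, and it takes a genuinely different (and arguably more direct) route than the paper: you verify condition (c) of Lemma~\ref{lem2} by observing that Proposition~\ref{coideal} makes $\BB_H$ antipode-stable when $H$ is cocommutative, so any $\gamma \in G(\BB_H)$ has $\gamma^{-1} = S(\gamma) \in \BB_H$. The paper instead uses Kostant's theorem to show that the kernel of $G(H)_{\ab} \to G(H_{\ab})$ is trivial and then invokes Theorem~\ref{thm-pointed}. Both work; yours avoids the structure theory entirely (the Cartier--Kostant--Milnor--Moore remark at the start of your argument is not actually used), while the paper's route has the advantage of factoring through the already-proved Theorem~\ref{thm-pointed}.

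The commutative case, however, has a genuine gap exactly where you flag it. You reduce injectivity of $\BB_H \otimes H \to S(t_H)_{\Theta}$, $b \otimes x \mapsto b\,t_x$, to the equality $S(t_H)_{\Theta}^{\co-H} = \BB_H$, and propose to obtain the hard inclusion by ``faithful-flatness descent using Theorem~\ref{flat}.'' But Theorem~\ref{flat} gives only \emph{flatness}, and Takeuchi's result identifying a coideal subalgebra with the coinvariants of the corresponding quotient (cited in Remark (b) following the theorem, via \cite[Th.~3]{Ta2}) requires \emph{faithful} flatness as a hypothesis --- which is essentially the conclusion you are trying to reach, so the argument as proposed is circular; the single character $\chi_0$ controls only one maximal ideal and does not upgrade flat to faithfully flat. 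The paper's proof sidesteps the coinvariance computation entirely: the Doi--Takeuchi cleftness theorem \cite[Th.~9]{DT}, applied to the coalgebra section $x \mapsto t_x$ of $\pi : S(t_H)_{\Theta} \to H$ with convolution inverse $x \mapsto t_x^{-1}$, gives an isomorphism $S(t_H)_{\Theta}^{\co-H} \otimes {}^{\sigma}H \cong S(t_H)_{\Theta}$ over the (a priori unknown) coinvariant algebra, with crossed-product cocycle precisely the generic cocycle $\sigma$. Since $\sigma$ takes values in $\BB_H \subseteq S(t_H)_{\Theta}^{\co-H}$, this exhibits $\AA_H = \BB_H \otimes {}^{\sigma}H$ as an $H$-comodule \emph{subalgebra} of $S(t_H)_{\Theta}$ --- this is where injectivity comes for free. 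One then concludes $\AA_H = S(t_H)_{\Theta}$ because $\AA_H$ contains all the generators $t_x$ and, by~\eqref{tinv}, all the $t_x^{-1}$ (your surjectivity step via Lemma~\ref{generators}). The equality $\BB_H = S(t_H)_{\Theta}^{\co-H}$ then falls out as a consequence rather than serving as an input.
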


Thus the conclusions of Theorem~7.2 and Corollary~7.3 of~\cite{AK} hold true
whenever~$H$ satisfies one of the hypotheses of 
Theorems~\ref{thm-finite},~\ref{thm-cocom},~\ref{thm-pointed}, and~\ref{thm-com}.

\begin{proof}
(a) Assume that $H$ is commutative.
We shall prove that $\AA_H= S(t_H)_{\Theta}$;
since $\AA_H = \BB_H \otimes H$ as a $\BB_H$-module, this implies the desired freeness.
By the commutativity of~$H$, the identity map of~$H$ induces a Hopf algebra
epimorphism $S(t_H)_{\Theta} \to H$, by which we regard $S(t_H)_{\Theta}$ as a right $H$-comodule algebra.
Since the natural map $H \to S(t_H)_{\Theta}$ is a coalgebra section of the epimorphism above,
it follows from the cleftness theorem of Doi and Takeuchi~\cite[Th.~9]{DT} that the section
induces an isomorphism
\begin{equation*}
S(t_H)_{\Theta}^{\co-H} \otimes {}^{\sigma} H \overset{\cong}{\longrightarrow} S(t_H)_{\Theta}
\end{equation*}
of $H$-comodule algebras over~$S(t_H)_{\Theta}^{\co-H}$.
Here, the left-hand side denotes the twisted product associated to the generic two-cocycle~$\sigma$.
Therefore, $S(t_H)_{\Theta}^{\co-H}$ contains~$\BB_H$, and $S(t_H)_{\Theta}$
naturally contains $\AA_H = \BB_H \otimes {}^{\sigma} H$ as an $H$-comodule subalgebra.
But $\AA_H$ contains the generators of~$S(t_H)_{\Theta}$
(note from~\eqref{tinv} that $t_x^{-1}$ is contained in~$\AA_H$).
Therefore, $\AA_H= S(t_H)_{\Theta}$, as desired.

(b) Assume that $H$ is pointed cocommutative. 
Then by Kostant's theorem (see~\cite[Sect.~8.1]{Sw})
the inclusion $kG(H) \hookrightarrow H$ splits as a Hopf algebra morphism.
It follows that 
\[
k[G(H)_{\ab}] = ( kG(H) )_{\ab} \to H_{\ab}
\]
is a split monomorphism;
hence the natural epimorphism $G(H)_{\ab} \to G(H_{\ab})$ is an isomorphism. 
We can then apply Theorem~\ref{thm-pointed}.
\end{proof}

\begin{rems}
(a) Theorem~\ref{thm-com} also holds true if
$H$ is a pointed Hopf algebra such that
the inclusion $kG(H) \hookrightarrow H$ splits as an algebra morphism.
In this case the proof above works as well.

(b) If $\BB_H \hookrightarrow S(t_H)_{\Theta}$ is \emph{faithfully flat}, 
then by~\cite[Th.~3]{Ta2} we have
\begin{equation}\label{BSQ}
\BB_H = S(t_H)_{\Theta}^{\co-Q} \, ,
\end{equation}
where $Q = S(t_H)_{\Theta}/(\BB_H^+)$ as above.
It follows that if $H_{\ab} \cong Q$ is the trivial one-dimensional Hopf algebra,
then $\BB_H = S(t_H)_{\Theta}$. This applies for instance to the case where
$G$ is a group with \emph{trivial abelianization}
(e.g., to a non-abelian simple group), so that in this case,
\[
\BB_{kG} = S(t_{kG})_{\Theta} = k[\, t_g^{\pm 1} \, |\, g\in G \, ] \, ,
\]
the Laurent polynomial algebra in the indeterminates~$t_g$ ($g\in G$).

(c)  Let $G$ be a \emph{finite} group and $H = O_k(G)$ the dual Hopf algebra of the group algebra~$kG$.
The elements of~$O_k(G)$ can be seen as $k$-valued fonctions on~$G$.
In this case, 
\[
S(t_H)_{\Theta} = k[\, t_g \, |\, g\in G \, ] \Bigl[\frac{1}{\Theta_G} \Bigr] \, ,
\]
where $\Theta_G = \det(t_{gh^{-1}})_{g,h \in G}$ is \emph{Dedekind's group determinant}
(see \cite[Ex.~B.5]{AK}).
Since $Q \cong H_{\ab} = H$ and since $H$-coinvariants identify with $G$-invariants,
it follows from~\eqref{BSQ} above that $\BB_H$ is the subalgebra of
$G$-invariant elements of~$S(t_H)_{\Theta}$:
\[
\BB_{kG} = \bigl(S(t_H)_{\Theta}\bigr)^G 
= k[\, t_g \, |\, g\in G \, ]{\,}^G \Bigl[\frac{1}{\Theta_G^2} \Bigr] \, ,
\]
where $G$ acts on the elements~$t_g$ by translation.
Incidentally, the lazy two-cocycles have been classified in this case by~\cite{GK}.
\end{rems}

\section{Another set of generators for $\BB_H$}\label{gens}

In~\cite[Sect.~7]{Ka1} a systematic method to exhibit elements of~$\BB_H$ was given.
Let us recall it.

Let $H$ be a  Hopf algebra and
$X_H$ a copy of the underlying vector space of~$H$;
we denote the identity map from $H$ to~$X_H$ by $x\mapsto X_x$ 
for all $x\in H$.
Consider the tensor algebra~$T(X_H)$.
If $\{x_i\}_{i\in I}$ is a linear basis of~$H$, 
then $T(X_H)$ is the free non-commutative algebra
over the indeterminates $\{X_{x_i}\}_{i\in I}$.
The natural right (resp.\ left) $H$-comodule structure on~$H$ 
extends to a right (resp.\ left) $H$-comodule algebra structure on~$T(X_H)$.

Now, for $x,y \in H$ consider the following elements of~$T(X_H)$:
\begin{equation}\label{P1}
P_x = X_{x_1} \, X_{S(x_2)}  \, ,
\qquad
P'_x = X_{S(x_1)} \, X_{x_2} \, ,
\end{equation}
\begin{equation}\label{P2}
Q_{x,y} = X_{x_1} \, X_{y_1} \, X_{S(x_2 y_2)} \, , \qquad
Q'_{x,y} = X_{S(x_1 y_1)} \, X_{x_2} \, X_{y_2} \, . 
\end{equation}
It is easy to check that $P_x$ and~$Q_{x,y}$ are right coinvariant elements of $T(X_H)$, 
and  that $P'_x$ and~$Q'_{x,y}$ are left coinvariant elements;
see~\cite[Lem\-ma~2.1]{AK} or~\cite[Prop.~7.1]{Ka1}.

The non-commutative algebra $T(X_H)$ can be related to the commutative algebra~$S(t_H)$
of Section~\ref{basic} by the right $H$-comodule algebra morphism
\begin{equation*}
\mu : T(X_H) \longrightarrow  S(t_H) \otimes H \, ; \; X_x \longmapsto  t_{x_1} \otimes x_2 \, .
\end{equation*}
By~\cite[Lemma~4.2]{AK} or~\cite[Prop.~7.2]{Ka1}, 
the map $\mu$ is universal in the sense that any right $H$-comodule algebra morphism
$T(X_H) \to H$ factors through~$\mu$.

Similarly, we can define an anti-algebra morphism 
\[
\mu': T(X_H) \longrightarrow  S(t_H)_{\Theta} \otimes H
\]
by $\mu'(X_x) = t^{-1}_{x_2} \otimes S(x_1)$ for all $x\in H$.
It is easy to check that $\mu'$ is the convolution inverse of~$\mu$.

Consider the following elements of $S(t_H)_{\Theta} \otimes H$:
\begin{equation*}
p_x = \mu(P_x) \, , \quad
p'_x = \mu'(P'_x) \, , \quad
q_{x,y} = \mu(Q_{x,y}) \, , \quad
q'_{x,y} = \mu(Q'_{x,y}) \, .
\end{equation*}

\begin{lemma}\label{lem-pq}
For all $x,y \in H$, we have
\[
p_x = t_{x_1} \, t_{S(x_2)} \otimes 1 \, , \quad
p'_{x} = t^{-1}_{S(x_1)} \, t^{-1}_{x_2} \otimes 1\, , 
\]
\[
q_{x,y} = t_{x_1} \, t_{y_1} \, t_{S(x_2y_2)} \otimes 1\, , \quad
q'_{x,y} = t^{-1}_{S(x_1y_1)} \, t^{-1}_{x_2} \, t^{-1}_{y_2} \otimes 1\, .
\]
\end{lemma}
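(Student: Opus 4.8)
The plan is to compute each of the four elements directly from the definitions of $\mu$, $\mu'$, and the coinvariant elements $P_x$, $P'_x$, $Q_{x,y}$, $Q'_{x,y}$, keeping careful track of the Heyneman--Sweedler indices. The key facts I would use are: the formula $\mu(X_x) = t_{x_1}\otimes x_2$, the fact that $\mu$ is a right $H$-comodule algebra morphism (hence multiplicative), the antipode relation $x_1 S(x_2) = \eps(x)1 = S(x_1)x_2$ in~$H$, and the coalgebra compatibility $\Delta(t_x) = t_{x_1}\otimes t_{x_2}$ from~\eqref{comm-coalg}. For $\mu'$ I would use $\mu'(X_x) = t^{-1}_{x_2}\otimes S(x_1)$ together with the fact that $\mu'$ is an \emph{anti}-algebra morphism and a right $H$-comodule map.

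First I would handle $p_x = \mu(P_x) = \mu(X_{x_1}X_{S(x_2)})$. Using multiplicativity of~$\mu$ and the formula for~$\mu$ on generators, this becomes $(t_{x_1}\otimes x_2)(t_{S(x_4)}\otimes S(x_3))$ after applying~$\Delta^{(3)}$; the product in $S(t_H)_\Theta\otimes H$ then gives $t_{x_1} t_{S(x_4)}\otimes x_2 S(x_3)$, and since $x_2 S(x_3) = \eps(x_2)1$ we are left with $t_{x_1}t_{S(x_2)}\otimes 1$ after reindexing. The computation for $q_{x,y} = \mu(Q_{x,y})$ is entirely parallel, producing $t_{x_1}t_{y_1}t_{S(x_2y_2)}\otimes 1$; here one uses that $X_{S(x_2y_2)}$ sits at the right end so its $H$-component $S(\cdot)$ gets absorbed by the preceding $x y$-component via $x_2 y_2 S(x_3 y_3) = \eps(x_2 y_2)1$, or more simply one notes $Q_{x,y}$ is right coinvariant and applies~$\mu$ which sends right coinvariants of $T(X_H)$ into $S(t_H)\otimes 1$, so the $H$-component is forced to be~$1$ and only the $S(t_H)$-component needs to be read off.

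For $p'_x = \mu'(P'_x) = \mu'(X_{S(x_1)}X_{x_2})$ I would use that $\mu'$ reverses order: it equals $\mu'(X_{x_2})\,\mu'(X_{S(x_1)})$ (product in $S(t_H)_\Theta\otimes H$), which by the formula for~$\mu'$ on generators expands to $(t^{-1}_{x_3}\otimes S(x_2))(t^{-1}_{S(x_1)}\otimes S(S(x_1')))$ with appropriate indexing from $\Delta^{(3)}$; multiplying in the comodule algebra and using the antipode axioms to collapse the $H$-component to a counit gives $t^{-1}_{S(x_1)}t^{-1}_{x_2}\otimes 1$. Alternatively, and more cleanly, one invokes the fact noted in the excerpt that $\mu'$ is the convolution inverse of~$\mu$, that $P'_x$ is \emph{left} coinvariant, and that $\mu'$ sends left coinvariants into $S(t_H)_\Theta\otimes 1$, so again the $H$-component is~$1$ and one only computes the scalar part. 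The case $q'_{x,y} = \mu(Q'_{x,y})$ — note this one uses $\mu$, not $\mu'$ — is the most delicate: since $Q'_{x,y}$ is left coinvariant but $\mu$ is a \emph{right} comodule map, the claim that $\mu(Q'_{x,y})$ lands in $S(t_H)_\Theta\otimes 1$ is not immediate from coinvariance, so here I genuinely expand $\mu(X_{S(x_1y_1)}X_{x_2}X_{y_2})$ as a product of three terms $(t_{S(x_2y_2)}\otimes S(x_1y_1))(t_{x_3}\otimes x_4)(t_{y_3}\otimes y_4)$ and check that the $H$-components telescope via $S(x_1y_1)\cdot x_2 y_2 = \eps(x_1 y_1)1$ — but one must be careful that $x_2 y_2$ multiplies $x$ and $y$ in the right order against $S(x_1 y_1) = S(y_1)S(x_1)$; matching the indices so that the counit actually comes out is the one place where a sign or ordering slip would be easy, and that is the main obstacle. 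Once that telescoping is verified, reindexing yields $t^{-1}_{S(x_1y_1)}t^{-1}_{x_2}t^{-1}_{y_2}\otimes 1$ — wait, this uses $\mu$ so it should produce $t$'s, not $t^{-1}$'s, which signals that one should recheck whether the intended map for $q'_{x,y}$ is $\mu$ or $\mu'$; I would resolve this by comparing with the stated formula, which forces the computation to be done with~$\mu'$ (the anti-algebra morphism), after which the ordering works out as in the $p'_x$ case and gives the displayed answer.
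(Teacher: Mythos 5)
Your proposal is correct and follows essentially the same route as the paper, whose proof computes only $p_x$ explicitly — expanding $\mu$ on the two factors and collapsing the middle Sweedler legs via $x_2\,S(x_3)=\eps(x_2)\,1$ — and dismisses the remaining three cases as ``similar computations''. You are also right that the stated formula for $q'_{x,y}$ forces the definition to be read as $q'_{x,y}=\mu'(Q'_{x,y})$ rather than the printed $\mu(Q'_{x,y})$: with $\mu$ the $H$-component $S(x_1y_1)\,x_4\,y_4$ does not telescope and the output would involve $t$'s instead of $t^{-1}$'s, so your detection and resolution of that inconsistency is the correct reading, and your $\mu'$-computation (reversing the order of the factors and collapsing $S(x_3y_3)\,S(S(x_2y_2))=S\bigl(S(x_2y_2)\,x_3y_3\bigr)=\eps(x_2)\eps(y_2)\,1$) gives exactly the displayed answer.
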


\begin{proof}
We have
\begin{eqnarray*}
p_x & = & \mu(X_{x_1}) \, \mu(X_{S(x_2)} ) 
=  t_{x_1} \, t_{S(x_4)}  \otimes x_2 S(x_3)\\
& = &  t_{x_1} \, t_{S(x_3)}  \otimes \eps(x_2) \, 1
=  t_{x_1} \, t_{S(x_2)} \otimes 1 \, . 
\end{eqnarray*}
Similar computations work for the other elements.
\end{proof}

It follows that 
the elements $p_x$, $p'_x$, $q_{x,y}$, $q'_{x,y}$ belong to 
the subalgebra $S(t_H)_{\Theta} \otimes 1$ of~$S(t_H)_{\Theta} \otimes H$,
which we identify with~$S(t_H)_{\Theta}$. 
We can therefore drop the suffix~$\otimes 1$ in the previous formulas.

Note that $p_x$ and $q_{x,y}$ belong to the polynomial algebra~$S(t_H)$ generated
by the elements~$t_x$, 
whereas $p'_x$ and $q'_{x,y}$ belong to the subalgebra generated
by the elements~$t^{-1}_x$.

It follows from~\eqref{ant-comm-coalg} and Lemma~\ref{lem-pq}
that if $H$ is \emph{cocommutative}, then $p'_{x}$ and $q'_{x,y}$
are the antipodes of~$p_{x}$ and~$q_{x,y}$, respectively:
\begin{eqnarray*}
S(p'_{x}) = t_{S(x_1)} \, t_{x_2} = p_x
\quad\text{and}\quad 
S(q'_{x,y}) = t_{S(x_1y_1)} \, t_{x_2} \, t_{y_2} = q_{x,y} \, .
\end{eqnarray*}

The following holds for any general Hopf algebra~$H$.

\begin{prop}
For all $x,y \in H$,
the elements $p_x$, $p'_x$, $q_{x,y}$, $q'_{x,y}$ belong to~$\BB_H$.
\end{prop}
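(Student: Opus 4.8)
The plan is to express each of $p_x$, $p'_x$, $q_{x,y}$, $q'_{x,y}$ as a polynomial (indeed, a $\BB_H$-linear combination) in the values of the generic cocycle~$\sigma$ and its inverse~$\sigma^{-1}$, using the explicit formulas of Lemma~\ref{lem-pq} together with the formulas~\eqref{triv-generic} for $\sigma$ and~$\sigma^{-1}$. The key observation is that $\sigma(x,y) = t_{x_1}t_{y_1}t^{-1}_{x_2y_2}$ already produces the "product pattern" $t_{x_1}t_{y_1}$ compensated by an inverse $t^{-1}$, and that one recovers pure $t$-monomials by multiplying $\sigma$-values against suitable $\sigma^{-1}$-values so that the spurious inverse terms telescope via $t_{z_1}t^{-1}_{z_2} = \eps(z)1$.

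First I would treat $p_x = t_{x_1}\,t_{S(x_2)}$. Observe that $\sigma(x_1, S(x_2)) = t_{x_1}\,t_{S(x_4)}\,t^{-1}_{x_2S(x_3)} = t_{x_1}\,t_{S(x_3)}\,t^{-1}_{\eps(x_2)1} = t_{x_1}\,t_{S(x_2)}\, t_1^{-1}$ (using that $x_2S(x_3) = \eps(x_2)1$). Since $t_1^{-1} = \sigma^{-1}(1,1) \in \BB_H$ by~\cite[Lemma~5.1]{AK}, and $t_1 = \sigma(1,1) \in \BB_H$ as well with $t_1 t_1^{-1} = 1$, we get $p_x = t_1\,\sigma(x_1,S(x_2)) \in \BB_H$. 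The same idea handles $q_{x,y} = t_{x_1}\,t_{y_1}\,t_{S(x_2y_2)}$: writing $z = xy$ one finds, after pushing the coproduct through, that the product $\sigma(x,y)\,\sigma(x_1y_1, S(x_2y_2))$ collapses — the first factor is $t_{x_1}t_{y_1}t^{-1}_{x_2y_2}$, the second is $t_{(xy)_1}t_{S((xy)_2)}t_1^{-1}$ after the $\eps$-reduction as for $p$ — so that $q_{x,y} = t_1\,\sigma(x_1,y_1)\,\sigma(x_2y_2, S(x_3y_3))$, hence lies in~$\BB_H$. For the primed elements $p'_x = t^{-1}_{S(x_1)}\,t^{-1}_{x_2}$ and $q'_{x,y} = t^{-1}_{S(x_1y_1)}\,t^{-1}_{x_2}\,t^{-1}_{y_2}$, I would run the dual computation: these already appeared inside the proof of Lemma~\ref{generators}, where it was shown that $t^{-1}_{S(x_1)}t^{-1}_{x_2} = \sigma^{-1}(S(x_1),x_2)\,t_1^{-1} \in \BB_H$; an entirely parallel manipulation with $\sigma^{-1}(x_1y_1,\,S(x_2y_2))$ (or grouping $x,y$ appropriately) shows $q'_{x,y}\in\BB_H$.

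The main obstacle, such as it is, is purely bookkeeping: getting the Heyneman–Sweedler indices to line up correctly when one substitutes the coproduct of $x$ (or of $xy$) into expressions like $\sigma(x_1, S(x_2))$, so that the antipode axiom $x_1 S(x_2) = \eps(x)1$ can be applied at the right spot and produces exactly one leftover $t_1^{-1}$. There is no conceptual difficulty; the identities $t_{z_1}t^{-1}_{z_2} = \eps(z)1$, $t_1,t_1^{-1}\in\BB_H$, $t_1 t_1^{-1}=1$, and the fact that $\BB_H$ is a subalgebra closed under multiplication are all that is needed. Once the four identities
\begin{equation*}
p_x = t_1\,\sigma(x_1, S(x_2)), \qquad
q_{x,y} = t_1\,\sigma(x_1,y_1)\,\sigma(x_2 y_2, S(x_3 y_3)),
\end{equation*}
\begin{equation*}
p'_x = t_1^{-1}\,\sigma^{-1}(S(x_1), x_2), \qquad
q'_{x,y} = t_1^{-1}\,\sigma^{-1}(S(x_1 y_1), x_2)\,\sigma^{-1}(x_3, y_3)
\end{equation*}
(or their obvious variants, depending on how one chooses to bracket) are verified by a direct comparison with Lemma~\ref{lem-pq}, the proposition follows since $\sigma$, $\sigma^{-1}$, $t_1$ and $t_1^{-1}$ all lie in~$\BB_H$.
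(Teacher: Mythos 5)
Your proof is correct and follows essentially the same route as the paper, which records exactly the identities $p_x = \sigma(x_1,S(x_2))\,\sigma(1,1)$, $p'_x=\sigma^{-1}(S(x_1),x_2)\,\sigma^{-1}(1,1)$, $q_{x,y}=\sigma(x_1,y_1)\,\sigma(x_2y_2,S(x_3y_3))\,\sigma(1,1)$ and $q'_{x,y}=\sigma^{-1}(S(x_1y_1),x_2y_2)\,\sigma^{-1}(x_3,y_3)\,\sigma^{-1}(1,1)$, matching yours since $\sigma(1,1)=t_1$ and $\sigma^{-1}(1,1)=t_1^{-1}$. The only slip is in your displayed formula for $q'_{x,y}$, where the second argument of the first factor should read $x_2y_2$ rather than $x_2$ --- precisely the ``obvious variant'' you anticipated.
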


This had been observed for the elements $p_x$ and $q_{x,y}$
in~\cite[Prop.~7.3]{Ka1}.

\begin{proof}
It suffices to express these elements in terms of the generators $\sigma^{\pm 1}(x,y)$ of~$\BB_H$.
Using~\eqref{triv-generic}, it is easily checked that
\begin{eqnarray*}
p_x & = & \sigma(x_1, S(x_2)) \, \sigma(1, 1) \, , \\
p'_x & = & \sigma^{-1}(S(x_1),x_2) \, \sigma^{-1}(1,1) \, , \\
q_{x,y} & = & \sigma(x_1, y_1) \, \sigma(x_2y_2, S(x_3y_3)) \, \sigma(1, 1) \, , \\
q'_{x,y} & = & \sigma^{-1}(S(x_1y_1),x_2y_2) \, \sigma^{-1}(x_3,y_3) \, \sigma^{-1}(1,1) \, .
\end{eqnarray*}
(The second equality has already been established in the proof of Lemma~\ref{generators}.)
The conclusion follows.
\end{proof}

We now express the values of~$\sigma$ and of~$\sigma^{-1}$ in terms of the previous elements.

\begin{prop}\label{prop-nice}
For all $x,y \in H$ we have
\[
\sigma(x,y) = q_{x_1,y_1} \, p'_{x_2y_2}
\quad\text{and}\quad
\sigma^{-1}(x,y) = p_{x_1y_1} \, q'_{x_2,y_2} \, .
\]
\end{prop}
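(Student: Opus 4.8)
The plan is to verify both identities by direct computation, substituting the explicit formulas for $p_x$, $p'_x$, $q_{x,y}$, $q'_{x,y}$ from Lemma~\ref{lem-pq} and for $\sigma^{\pm 1}$ from~\eqref{triv-generic}, and then collapsing the resulting products of $t$'s and $t^{-1}$'s using the defining relation $t_{z_1}\, t^{-1}_{z_2} = t^{-1}_{z_1}\, t_{z_2} = \eps(z)\, 1$ together with the commutativity of $S(t_H)_{\Theta}$.

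For the first identity, I would start from the right-hand side. By Lemma~\ref{lem-pq},
\[
q_{x_1,y_1}\, p'_{x_2 y_2}
= \bigl( t_{x_1}\, t_{y_1}\, t_{S(x_2 y_2)} \bigr)\,
  \bigl( t^{-1}_{S(x_3 y_3)}\, t^{-1}_{x_4 y_4} \bigr).
\]
Here I have applied the coproduct carefully: the single element $x_2 y_2$ appearing in $p'$ must be expanded via $\Delta(x_2 y_2) = x_2 y_2 \otimes x_3 y_3$, so that $p'_{x_2 y_2} = t^{-1}_{S(x_3 y_3)}\, t^{-1}_{x_4 y_4}$ with a corresponding reindexing. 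Now $t_{S(x_2 y_2)}\, t^{-1}_{S(x_3 y_3)}$ is of the form $t_{w_1}\, t^{-1}_{w_2}$ with $w = S(x_2 y_2)$ (using cocommutativity of the coproduct on the relevant legs, or rather the fact that $S$ is an anti-coalgebra map so that $\Delta(S(z)) = S(z_2) \otimes S(z_1)$, which one tracks through), hence collapses to $\eps(S(x_2 y_2))\,1 = \eps(x_2)\eps(y_2)\,1$. What remains is $t_{x_1}\, t_{y_1}\, \eps(x_2)\eps(y_2)\, t^{-1}_{x_3 y_3} = t_{x_1}\, t_{y_1}\, t^{-1}_{x_2 y_2} = \sigma(x,y)$ by~\eqref{triv-generic}. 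The second identity is proved the same way: $p_{x_1 y_1}\, q'_{x_2,y_2} = (t_{x_1 y_1}\, t_{S(x_2 y_2)})(t^{-1}_{S(x_3 y_3)}\, t^{-1}_{x_4}\, t^{-1}_{y_4})$, the middle pair $t_{S(x_2 y_2)}\, t^{-1}_{S(x_3 y_3)}$ collapses to $\eps(x_2)\eps(y_2)\,1$, leaving $t_{x_1 y_1}\, t^{-1}_{x_2}\, t^{-1}_{y_2} = \sigma^{-1}(x,y)$.

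The main thing to be careful about — and what I expect to be the only real obstacle — is bookkeeping with the Heyneman--Sweedler indices: the elements $p'_{x_2 y_2}$ and $q'_{x_2,y_2}$ are evaluated on elements that already carry a Sweedler leg, so one must correctly iterate the coproduct (using coassociativity and the fact that $\Delta$ is an algebra map on $H$, so $\Delta(x_i y_i)$ splits as $x_i y_i \otimes x_{i+1} y_{i+1}$) before the cancellation $t_{w_1} t^{-1}_{w_2} = \eps(w)$ can be applied. Once the indices are aligned correctly, every step is an application of the defining relation for $t^{-1}$ and of the counit axioms; no deeper input is needed. I would present the two computations as short displayed $\mathrm{eqnarray}^*$ arrays, mirroring the style of the proof of Lemma~\ref{generators} and of the preceding proposition.
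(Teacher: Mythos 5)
Your proposal is correct and follows essentially the same route as the paper: substitute the formulas of Lemma~\ref{lem-pq}, then cancel the middle pair $t_{S(x_iy_i)}\,t^{-1}_{S(x_{i+1}y_{i+1})}$ via the defining relation for $t^{-1}$ (the paper performs this collapse silently; your justification via $\Delta(S(z))=S(z_2)\otimes S(z_1)$ together with the commutativity of $S(t_H)_{\Theta}$ is exactly what makes it legitimate, since $H$ itself need not be cocommutative).
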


\begin{proof}
In view of~\eqref{triv-generic} again, we have
\begin{eqnarray*}
q_{x_1,y_1} \, p'_{x_2y_2}
& = & t_{x_1} \, t_{y_1} \, t_{S(x_2y_2)} \, t^{-1}_{S(x_3y_3)} \, t^{-1}_{x_4y_4} \\
& = & t_{x_1} \, t_{y_1} \, t^{-1}_{x_2y_2} 
= \sigma(x,y)\, .
\end{eqnarray*}
Similarly,
\begin{eqnarray*}
p_{x_1y_1} \, q'_{x_2,y_2}
& = & t_{x_1y_1} \, t_{S(x_2y_2)} \, t^{-1}_{S(x_3y_3)} \, t^{-1}_{x_4} \, t^{-1}_{y_4} \\
& = & t_{x_1y_1} \, t^{-1}_{x_2} \, t^{-1}_{y_2}
= \sigma^{-1}(x,y)\, .
\end{eqnarray*}
\end{proof}

\begin{corollary}
For any Hopf algebra~$H$, the algebra~$\BB_H$ is generated by the elements
$p_x$, $p'_x$, $q_{x,y}$, $q'_{x,y}$ ($x,y\in H$).
\end{corollary}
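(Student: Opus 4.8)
The plan is to deduce the corollary directly from Proposition~\ref{prop-nice} together with the preceding proposition. We already know that each of $p_x$, $p'_x$, $q_{x,y}$, $q'_{x,y}$ lies in $\BB_H$, so the subalgebra $\BB'_H$ they generate is contained in $\BB_H$. For the reverse inclusion, recall that by definition $\BB_H$ is generated as an algebra by the values $\sigma(x,y)$ and $\sigma^{-1}(x,y)$ for $x,y\in H$. Proposition~\ref{prop-nice} expresses each such value as a finite sum of products of two of the generators $p$, $p'$, $q$, $q'$ (after applying the Heyneman--Sweedler coproduct and summing), hence as an element of $\BB'_H$. Therefore every algebra generator of $\BB_H$ lies in $\BB'_H$, giving $\BB_H\subseteq\BB'_H$, and the two subalgebras coincide.

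Concretely, I would write: the inclusion $\BB'_H\subseteq\BB_H$ is the content of the previous proposition. Conversely, $\BB_H$ is generated by the elements $\sigma(x,y)$ and $\sigma^{-1}(x,y)$; by Proposition~\ref{prop-nice},
\[
\sigma(x,y) = q_{x_1,y_1}\, p'_{x_2 y_2}
\quad\text{and}\quad
\sigma^{-1}(x,y) = p_{x_1 y_1}\, q'_{x_2,y_2}\, ,
\]
where the right-hand sides are (implicitly) finite sums of products of the elements $p$, $p'$, $q$, $q'$, hence lie in $\BB'_H$. It follows that $\BB_H\subseteq\BB'_H$, so $\BB_H=\BB'_H$.

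There is essentially no obstacle here: the corollary is a formal consequence of the two propositions that precede it, and the only thing to be slightly careful about is the bookkeeping of the Sweedler summation—one should note that a product such as $q_{x_1,y_1}\,p'_{x_2y_2}$ unwinds, via $\Delta^{(n)}$, into a genuine finite $k$-linear combination of products of the generating elements evaluated at basis vectors of $H$, and that $\BB'_H$, being a subalgebra (in particular a $k$-subspace closed under multiplication), contains all such combinations. I would therefore keep the proof to two or three lines, citing Proposition~\ref{prop-nice} for the key identities and the preceding proposition for the containment $p_x,p'_x,q_{x,y},q'_{x,y}\in\BB_H$.
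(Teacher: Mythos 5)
Your proof is correct and is exactly the argument the paper intends: the containment $p_x,p'_x,q_{x,y},q'_{x,y}\in\BB_H$ from the preceding proposition gives one inclusion, and the identities of Proposition~\ref{prop-nice} give the other. The remark about the Sweedler summation unwinding to a finite $k$-linear combination is a reasonable precaution but adds nothing beyond what the paper leaves implicit.
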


The theory of polynomial identities worked out in~\cite{AK}
and the specific computations in the case of the Sweedler algebra
(see~\cite[Sect.~10]{AK} and~\cite[Sect.~4 \& Ex.~7.4]{Ka1})
suggest that the generators $p_x$, $p'_x$, $q_{x,y}$, $q'_{x,y}$
are more natural than the values of~$\sigma^{\pm 1}$.

\begin{corollary}\label{coro-coco}
If $H$ is a cocommutative Hopf algebra, then $\BB_H$ is generated by
the elements $p_x$, $q_{x,y}$ and their antipodes.
\end{corollary}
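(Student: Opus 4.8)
The plan is to deduce this immediately from the preceding Corollary together with the computation recorded just before Proposition~\ref{prop-nice}. By that Corollary, for an arbitrary Hopf algebra~$H$ the algebra~$\BB_H$ is generated by the four families $p_x$, $p'_x$, $q_{x,y}$, $q'_{x,y}$ ($x,y\in H$). So it suffices to show that, when $H$ is cocommutative, each $p'_x$ and each $q'_{x,y}$ is expressible in terms of the $p_z$'s, the $q_{z,w}$'s, and their antipodes.

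First I would invoke the displayed identities established before Proposition~\ref{prop-nice}: using~\eqref{ant-comm-coalg} and Lemma~\ref{lem-pq} one gets $S(p'_x) = p_x$ and $S(q'_{x,y}) = q_{x,y}$ in $S(t_H)_{\Theta}$. Since $H$ is cocommutative, $S(t_H)_{\Theta}$ is cocommutative as well (and it is always commutative), so its antipode~$S$ is involutive. Applying $S$ to these two identities yields
\[
p'_x = S(p_x) \quad\text{and}\quad q'_{x,y} = S(q_{x,y})
\]
for all $x,y\in H$. Hence the generating set $\{p_x, p'_x, q_{x,y}, q'_{x,y}\}$ coincides with $\{p_x, q_{x,y}, S(p_x), S(q_{x,y})\}$, which is precisely the asserted generating set.

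The one point worth a word of justification — and the only place where cocommutativity is genuinely used — is that $\BB_H$ is stable under the antipode of $S(t_H)_{\Theta}$, so that $S(p_x)$ and $S(q_{x,y})$ really are elements of~$\BB_H$; this is part of Proposition~\ref{coideal}, which says that $\BB_H$ is a Hopf subalgebra of~$S(t_H)_{\Theta}$ in the cocommutative case. Beyond this there is no real obstacle: the argument is a formal consequence of the earlier Corollary and the involutivity of the antipode, so I would expect the whole proof to be a couple of lines.
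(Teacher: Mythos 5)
Your proof is correct and is essentially the paper's own argument: the paper's proof just recalls that, by the computation preceding Proposition~\ref{prop-nice}, $p'_x$ and $q'_{x,y}$ are the antipodes of $p_x$ and $q_{x,y}$ in the cocommutative case, and combines this with the preceding corollary exactly as you do. One small quibble with your commentary: cocommutativity is used not only for the antipode-stability of $\BB_H$ but already in the identities $S(p'_x)=p_x$ and $S(q'_{x,y})=q_{x,y}$ themselves, since without it $S(p'_x)=t_{S(x_1)}\,t_{x_2}$ need not equal $p_x=t_{x_1}\,t_{S(x_2)}$.
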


\begin{proof}
In this case we have already observed that $p'_x$ and $q'_{x,y}$ are the antipodes of
$p_x$ and $q_{x,y}$, respectively.
\end{proof}

\begin{rem}
It follows from Corollaries~\ref{coro-LH} and~\ref{coro-coco}
that any non-de\-gen\-er\-ate invertible two-cocycle on a cocommutative Hopf algebra
is \emph{nice} in the sense of~\cite[Sect.~9]{AK}. 
\end{rem}

\section*{Acknowledgements}

The present joint work is part of the project ANR BLAN07-3$_-$183390 
``Groupes quantiques~: techniques galoisiennes et d'int\'egration" funded
by Agence Nationale de la Recherche, France.

\end{document}